\DeclareMathOperator{\mypart}{\mathsf{COB}}
\DeclareMathOperator{\mylin}{\mathsf{LCU}}
\newtheorem{theorem}[equation]{Theorem}
\newtheorem{construction}[equation]{Lemma/Construction}
\newtheorem{assumption}[equation]{Assumption}
\newtheorem{lemma}[equation]{Lemma}
\newtheorem{corollary}[equation]{Corollary}
\newtheorem{fact}[equation]{Fact}
\newtheorem{facts}[equation]{Facts}
\theoremstyle{remark}
\newtheorem{remark}[equation]{Remark}
\theoremstyle{definition}
\newtheorem{definition}[equation]{Definition}
\numberwithin{equation}{section}
\newcommand{\on}{\mathord\upharpoonright}
\newcommand{\ve}{\varepsilon}
\newcommand{\forces}{\Vdash}
\newcommand{\Eor}{\mathbb{E}}
\DeclareMathOperator{\dom}{dom}
\newcommand{\textdef}{\emph}
\newcommand{\mye}{*+[F.]{\phantom{\lambda}}}
\DeclareMathOperator{\crit}{cr}
\newcommand{\myparti}{\mypart_i}
\newcommand{\mypartI}{\mypart_1}
\newcommand{\mypartII}{\mypart_2}
\newcommand{\mypartIII}{\mypart_3}
\newcommand{\mypartIV}{\mypart_4}
\newcommand{\mylini}{\mylin_i}
\newcommand{\mylinI}{\mylin_1}
\newcommand{\mylinII}{\mylin_2}
\newcommand{\mylinIII}{\mylin_3}
\newcommand{\mylinIV}{\mylin_4}
\DeclareMathOperator{\cf}{cf}
\DeclareMathOperator{\supp}{supp}
\DeclareMathOperator{\Rel}{R}
\newcommand{\Ri}{\Rel_i}
\DeclareMathOperator{\RI}{R_1}
\DeclareMathOperator{\RII}{R_2}
\DeclareMathOperator{\RIII}{R_3}
\DeclareMathOperator{\RIV}{R_4}
\newcommand{\mR}{\mathbb{R}}
\newcommand{\Pa}{\mathbb{P}^5}
\newcommand{\PaB}{\mathbb{P}^{*,5}}
\newcommand{\Pai}{\mathbb{P}^i}
\newcommand{\Paell}{\mathbb{P}^\ell}
\newcommand{\Paip}{\mathbb{P}^{i+1}}
\newcommand{\PaIX}{\mathbb{P}^9}
\DeclareMathOperator{\cov}{cov}
\DeclareMathOperator{\cof}{cof}
\DeclareMathOperator{\non}{non}
\DeclareMathOperator{\add}{add}
\newcommand{\covnull}{\cov(\mathcal N)}
\newcommand{\cofnull}{\cof(\mathcal N)}
\newcommand{\addnull}{\add(\mathcal N)}
\newcommand{\nonnull}{\non(\mathcal N)}
\newcommand{\covmeager}{\cov(\mathcal M)}
\newcommand{\cofmeager}{\cof(\mathcal M)}
\newcommand{\addmeager}{\add(\mathcal M)}
\newcommand{\nonmeager}{\non(\mathcal M)}
\newcommand{\BUP}[1]{#1^U}
\subjclass[2010]{03E17}
\keywords{Set theory of the reals, Cicho\'n's  diagram, Forcing, Compact cardinals}
\date{\today}
\title{Cicho\'{n}'s maximum}
\thanks{Supported by 
Austrian Science Fund (FWF): P29575 \& I3081 
and 
National Science Foundation NSF DMS-1362974 (first author),
Austrian Science Fund (FWF): P26737 \& P30666 (second author)
European Research Council grant ERC-2013-ADG 338821 (third author).
This is publication number 1122 of the third author.}
\author{Martin Goldstern}
\address{Institute of Discrete Mathematics and Geometry\\
Technische Universität Wien (TU Wien).}
\email{martin.goldstern@tuwien.ac.at}
\urladdr{http://www.tuwien.ac.at/goldstern/}
\author{Jakob Kellner}
\address{Institute of Discrete Mathematics and Geometry\\
Technische Universität Wien (TU Wien).}
\email{jakob.kellner@tuwien.ac.at}
\urladdr{http://dmg.tuwien.ac.at/kellner/}
\author{Saharon Shelah}
\address{The Hebrew University of Jerusalem and Rutgers University.}
\email{shlhetal@mat.huji.ac.il}
\urladdr{http://shelah.logic.at/}
\begin{document}

\begin{abstract}
Assuming  four strongly compact cardinals, it is consistent that all entries in Cicho\'n's  diagram (apart from $\addmeager$ and $\cofmeager$, whose values are determined by the others) 
are pairwise different; more
specifically that
\[
\aleph_1 < \addnull < \covnull < \mathfrak{b} <
\nonmeager < \covmeager <
 \mathfrak{d} < \nonnull < \cofnull < 2^{\aleph_0}.\]
\end{abstract}

\maketitle
\section*{Introduction}
\subsection*{Independence}
How many Lebesgue null sets are required to
cover the real line?\\
Obviously countably many
are not enough,
as the countable union of null sets is null;
and obviously continuum many are enough,
as $\bigcup_{r\in\mathbb{R}}\{r\}=\mathbb{R}$.

The answer to our question is a cardinal number called
$\covnull$. As we have just seen,
\[
\aleph_0=|\mathbb{N}|<\covnull\le |\mathbb{R}|=2^{\aleph_0}.
\]
In particular, 
if the Continuum Hypothesis (CH) holds (i.e., if there are 
no cardinalities strictly between $|\mathbb{N}|$ and 
$|\mathbb{R}|$, or equivalently: if $\aleph_1=2^{\aleph_0}$), then 
$\covnull=2^{\aleph_0}$; but 
without CH, the answer could also be some cardinal
less than $2^{\aleph_0}$.
According to Cohen's famous result~\cite{MR0157890}, 
CH is independent of the usual axiomatization 
of mathematics, the set theoretic axiom system ZFC. I.e.,
we can prove that the ZFC axioms neither imply CH nor imply $\lnot$CH.
For this result, Cohen introduced the method of forcing,
which has been continuously expanded and refined ever since.
Forcing also proves that the
value of $\covnull$ is independent. For example,
$\covnull=\aleph_1<2^{\aleph_0}$ is consistent, as is 
$\aleph_1<\covnull=2^{\aleph_0}$.

\subsection*{Cicho\'n's  diagram}
$\covnull$ is a so-called cardinal characteristic of the continuum. Other
well-studied
characteristics include the following:%
\begin{itemize}
\item
$\addnull$ is the smallest number of Lebesgue null sets whose
union is not null.
\item $\nonnull$ is the smallest cardinality
of a non-null set.
\item $\cofnull$
is the smallest size of a cofinal family of null sets,
i.e., a family that contains for each null set $N$ 
a superset of $N$.
\item 
Replacing ``null'' with ``meager'', we can 
analogously define $\addmeager$,
$\nonmeager$, $\covmeager$, and $\cofmeager$.
\item 
In addition, we define $\mathfrak{b}$ as the smallest size of an
unbounded family, i.e., a family $\mathcal{H}$ of functions
from $\mathbb{N}$ to $\mathbb{N}$ such that for every 
$f:\mathbb{N}\to\mathbb{N}$ there is some $h\in\mathcal{H}$
which is not almost everywhere bounded by $f$.

Equivalently, $\mathfrak{b}=\add(\mathcal K)=\non(\mathcal K)$,
where $\mathcal K$ is the $\sigma$-ideal generated by the compact subsets of the irrationals.
\item
And $\mathfrak{d}$ is the smallest size of a
dominating family, i.e., a family $\mathcal{H}$ such that for every 
$f:\mathbb{N}\to\mathbb{N}$ there is some $h\in\mathcal{H}$
such that $(\exists n\in\mathbb{N})\,(\forall m>n)\, h(m)>f(m)$.

Equivalently, $\mathfrak{d}=\cov(\mathcal K)=\cof(\mathcal K)$.
\item For the ideal $\text{ctbl}$ of countable sets, we trivially get 
$\add(\text{ctbl})=\non(\text{ctbl})=\aleph_1$ and $\cov(\text{ctbl})=\cof(\text{ctbl})=2^{\aleph_0}$.
\end{itemize}

The characteristics
 we have mentioned so far,\footnote{There are many other cardinal characteristics, see for example~\cite{MR2768685}, but the ones in Cicho\'n's  diagram
 seem to be considered to be the most important ones.} and the basic relations between them, can be summarized 
in Cicho\'n's  diagram:
\[
\xymatrix@=2.5ex{
           & \covnull\ar[r]        & \nonmeager \ar[r]      &  \cofmeager \ar[r]     & \cofnull\ar[r]  &2^{\aleph_0} \\
           &                    & \mathfrak b\ar[r]\ar[u]  &  \mathfrak d\ar[u] &              &\\ 
\aleph_1\ar[r] & \addnull\ar[r]\ar[uu] & \addmeager\ar[r]\ar[u] &  \covmeager\ar[r]\ar[u]& \nonnull\ar[uu] &
}
\]    
An arrow from $\mathfrak x$ to $\mathfrak y$ indicates that ZFC proves
$\mathfrak x\le \mathfrak y$. 
Moreover, $\max(\mathfrak d,\nonmeager)=\cofmeager$ and $\min(\mathfrak b,\covmeager)=\addmeager$. 
A (by now) classical series of
theorems~\cite{MR719666,MR1233917,MR781072,MR1071305,MR1022984,MR613787,MR735576,MR697963,MR800191}
proves these (in)equalities in ZFC and shows that they are the only ones
provable. More precisely, all assignments of the values $\aleph_1$ and $\aleph_2$
to the characteristics in Cicho\'n's Diagram are consistent with ZFC, provided they do
not contradict the above (in)equalities.  (A complete proof can be found
in~\cite[ch.~7]{BJ}.)

Note that Cicho\'n's  diagram shows a fundamental asymmetry
between the ideals of Lebesgue null sets and of meager sets
(we will mention another one in the context of large cardinals).
Any such asymmetry is hidden if we assume CH, as under CH
not only all the characteristics are $\aleph_1$, but even
the Erd\H os-Sierpi\'nski Duality Theorem holds~\cite[ch.\ 19]{MR584443}:
There is an involution $f:\mathbb{R}\to \mathbb{R}$ (i.e., a bijection such that $f\circ f=\text{Id}$) such that
$A\subseteq \mathbb{R}$ is meager  iff
$f''A$ is null.

So it is settled which assignments of $\aleph_1$ and $\aleph_2$ to
Cicho\'n's  diagram are consistent. 
It is more challenging to show that the diagram can 
contain more than two different cardinal values. 
For recent progress in this direction
see, e.g., \cite{MR3047455,MR3513558,five,ourselves}.

The result of this paper is in some respect the strongest possible, 
as we show that consistently \emph{all} the entries are pairwise different (apart 
from the two equalities provable in ZFC mentioned above). Of course one can ask more; see the questions in Section~\ref{sec:questions}. In particular, we 
use large cardinals in the proof.

\subsection*{Large cardinals}
As mentioned, ZFC is an axiom system for the whole of mathematics.
A much ``weaker'' axiom system (for the natural numbers) is PA
(Peano arithmetic).

Gödel's Incompleteness Theorem shows that a theory
such as PA or ZFC can never prove its own consistency. 
On the other hand, it is trivial to show in ZFC that PA is consistent (as in
ZFC we can construct $\mathbb{N}$ and prove that it satisfies PA). We can say that ZFC has a higher 
consistency strength than PA.

One axiom of ZFC is INF, the statement ``there is an infinite cardinal''. If we remove INF from ZFC, we end up with a theory ZFC$^0$ that can still describe concrete hereditarily finite objects and can be interpreted (admittedly in a not very natural way) as a weak version of PA which has the same consistency strength as PA.\footnote{More concretely, ZF$_\text{fin}\coloneq \text{ZFC}^0+\lnot\text{INF}$ can be seen to be ``equivalent'' to PA (i.e., mutually interpretable); this goes back to Ackermann~\cite{MR1513141}, see the survey~\cite{MR2357524}.}
So we can say that adding an infinite cardinal to ZFC$^0$ increases the consistency strength.

There are notions of cardinals numbers much ``stronger'' than just ``infinite''.
Often, such large cardinal assumptions (abbreviated LC in the following) have the following form:
\begin{quote}
There is a cardinal $\kappa>\aleph_0$ that behaves towards the smaller cardinals 
in a similar way as $\aleph_0$ behaves to finite numbers.
\end{quote}
A forcing proof shows, e.g.,
\begin{quote}
If ZFC is consistent, then ZFC+$\lnot$CH is consistent,
\end{quote}
and this implication can be proved in a very weak system such as PA.
However, we cannot prove (not even in ZFC) for any large cardinal
\begin{quote}
``if ZFC is consistent, then ZFC+LC is consistent''; 
\end{quote}
because in ZFC+LC we can prove the consistency of ZFC.
We say: LC has a higher consistency strength than ZFC.

An instance of a large cardinal (in fact a very weak one, a so-called inaccessible cardinal),
appears in another striking example of the asymmetry between measure and category:
The following statement is equiconsistent with an inaccessible cardinal~\cite{MR0265151,MR768264}:
\begin{quote}
All projective\footnote{This is the smallest family containing the Borel sets and closed under continuous images, complements, and countable unions. In practice, all
sets used in mathematics that are defined without using AC are projective.
Alternatively we could use the statement: ``ZF (without the Axiom of choice) holds and all sets of reals are Lebesgue measurable.''} sets of reals are 
Lebesgue measurable.
\end{quote}
In contrast, according to~\cite{MR768264} no large cardinal assumption is required
to show the consistency of
\begin{quote}
All projective sets of reals have the property of Baire.
\end{quote}
So we can assume ``for free'' that all (reasonable) sets have the Baire property,
whereas we have to provide additional
consistency strength for Lebesgue measurability.

In the case of our paper, we require (the consistency of)
the existence of four compact cardinals to prove our main result.
It seems unlikely that any large cardinals
are actually required; but a proof without them would probably be considerably more complicated. It is not unheard of that ZFC results first have (simpler) proofs
using large cardinal assumptions; an example can be found in~\cite{MR2096454}.

\subsection*{Annotated Contents}

{}  From now on, we 
assume that the reader is familiar with some basic
properties 
of the characteristics defined above, as well as with the
associated 
forcing notions Cohen, amoeba, random, Hechler and eventually different, all of
which can  be found, e.g., in~\cite{BJ}.


This paper consists of three parts:

In Section~\ref{sec:partA},
we present a finite support ccc iteration $\Pa$ forcing that
$\aleph_1 < \addnull < \covnull < \mathfrak{b}<\nonmeager<\covmeager=2^{\aleph_0}$. 
This result is not new: Such a forcing was introduced in~\cite{MR3513558}, and we follow this construction  
quite closely. However, we need GCH
in the ground model, whereas \cite{MR3513558}
requires $2^\chi\gg \lambda$ for some $\chi<\lambda$.
Also, we describe how the inequalities are ``strongly witnessed'',
see Definitions~\ref{def:linear} and~\ref{def:partial}. 

In Section~\ref{sec:partB}, we show how to construct (under GCH) for $\kappa$ strongly compact and $\theta>\kappa$ regular a ``BUP-embedding'' from $\kappa$ to $\theta$, i.e., an elementary
embedding $j:V\to M$ with critical point $ \kappa$ and $\cf(j(\kappa))=|j(\kappa)|=\theta$
such that $M$ is transitive and ${<}\kappa$-closed 
and such that $j''S$ is cofinal in $j(S)$ for 
every $\le\kappa$-directed partial order $S$.
For a ccc forcing $P$ we investigate $j(P)$
and show that $j(P)$ forces the same values to some
characteristics in Cicho\'n's diagram as $P$
and different values to others, in a very controlled way; assuming that there were
``strong witnesses'' for $P$ forcing the inital values, as described in Section~\ref{sec:partA}.

Section~\ref{sec:partC} 
contains the main result of this paper: Assuming four strongly compact cardinals, we let $k$ be the
composition of four such BUP-embeddings, mapping
$\Pa$ to a ccc forcing $\PaIX$. We  
then show that $\PaIX$ forces
\[
\aleph_1 < \addnull < \covnull < \mathfrak{b} <\nonmeager<\covmeager< \mathfrak{d} < \nonnull < \cofnull < 2^{\aleph_0},\]
i.e., we get for increasing cardinals $\lambda_i$ the constellation of Figure~\ref{fig:ourorder}.
\begin{figure}
  \centering
\[
\xymatrix@=2.5ex{
           & \lambda_2\ar[r]        & \lambda_4 \ar[r]      &  \mye \ar[r]     & \lambda_8\ar[r]  &\lambda_9 \\
           &                    & \lambda_3\ar[r]\ar[u]  &  \lambda_6\ar[u] &              &\\ 
\aleph_1\ar[r] & \lambda_1\ar[r]\ar[uu] & \mye\ar[r]\ar[u] &  \lambda_5\ar[r]\ar[u]& \lambda_7\ar[uu] &
}
\]    
    \caption{\label{fig:ourorder}Our cardinal configuration (the $\lambda_i$ are increasing).}
\end{figure}
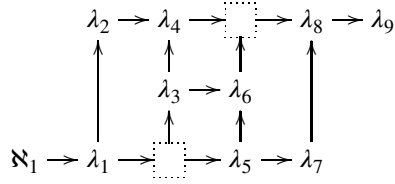

Boolean ultrapowers as used in this paper  were 
investigated by Mansfield~\cite{MR0300887}
and recently applied e.g.\ by  the third author with Malliaris~\cite{MR3451934} and with Raghavan~\cite{F1611}, where
Boolean ultrapowers of forcing notions are used to force specific values to certain cardinal characteristics.
Recently the third author developed a method of using Boolean ultrapowers to control characteristics in Cicho\'n's diagram. A first
(and simpler) application of these methods is given in~\cite{ourselves}.

We mention some open questions in Section~\ref{sec:questions}.

\subsection*{Acknowledgments}
We thank three anonymous referees for pointing out several
unclarities and typos; and Moti Gitik and Diego Mej\'ia
for suggestions to improve the presentation.

\section{The initial forcing}\label{sec:partA}

\subsection{Good iterations and the \texorpdfstring{$\mylin$}{LCU} property}
We want to show that some forcing $\Pa$
results in $\mathfrak{x}=\lambda_i$ for certain characteristics 
$\mathfrak{x}$.
So we have to show two ``directions'', 
$\mathfrak{x}\le \lambda_i$ 
and $\mathfrak{x}\ge \lambda_i$. For most of the characteristics,
one direction will 
 use the fact that $\Pa$ is ``good'';
a notion
introduced by Judah and the third author~\cite{MR1071305} and Brendle~\cite{MR1129144}.
We now recall the basic facts of good iterations, and specify the instances 
of the relations we use.

\begin{assumption}\label{asm:hwtt}
We will consider binary relations $\Rel$ on $X=\omega^\omega$ (or on $X=2^\omega)$ that satisfy the following: There are relations $\Rel^k$ such that
$\Rel=\bigcup_{k\in\omega} \Rel^k$, each $\Rel^k$ is a closed subset (and in fact absolutely defined)
of $X\times X$, and for  $g\in X$
and $k\in\omega$, the set $\{f\in X:\, f\Rel^k g\}$ is nowhere dense (and of course closed). 
Also, for all $g\in X$ there is some $f\in X$ with $f \Rel g$.
\end{assumption} 

We will actually use another space as well,
the space $\mathcal C$ 
of strictly positive rational sequences $(q_n)_{n\in\omega}$ such that
$\sum_{n\in\omega}q_n\le 1$.
It is easy to see that $\mathcal C$ is homeomorphic to
$\omega^\omega$, when
we equip the rationals with the discrete topology 
and use the product topology. Let us fix one such (absolutely defined) homeomorphism.

We use the following instances of relations $\Rel$ 
on $X$; it is easy
to see that they all satisfy the assumption (for $X_1=\mathcal C$ we use the homeomorphism mentioned above):
\begin{definition}
\begin{itemize}
\item[1.] $X_1=\mathcal C$: $f \RI g$  if
$(\forall^*n\in\omega)\, f(n)\le g(n)$.\\
(We use $\forall^*n$ as abbreviation for $(\exists n_0)\, (\forall n>n_0)$.)
\item[2.] 
Fix a partition 
$(I_n)_{n\in\omega}$ of $\omega$ with $|I_n|=2^{n+1}$.
\\
$X_2=2^\omega$: $f \RII g$ if $(\forall^* n\in\omega)\, f\restriction I_n\neq g\restriction I_n$.
\item[3.] $X_3= \omega^\omega$: $f \RIII g$  if $(\forall^* n\in\omega)\, f(n)\le g(n)$.
\item[4.] $X_4= \omega^\omega$: $f \RIV g$  if $(\forall^* n\in\omega)\, f(n)\neq g(n)$.
\end{itemize}
\end{definition}

Note that Assumption~\ref{asm:hwtt} is satisfied, witnessed by the relations $\Rel^k_i$ 
defined by replacing $(\forall^* n\in\omega)$ with $(\forall n\ge k)$.

We say ``$f$ is bounded by $g$'' if $f\Rel g$; and, for $\mathcal Y\subseteq \omega^\omega$, 
``$f$ is bounded by $\mathcal Y$'' if
$(\exists y\in \mathcal Y)\, f \Rel y$. We say ``unbounded'' for ``not bounded''. (I.e., $f$ is unbounded by $\mathcal Y$ if $(\forall y\in \mathcal Y)\,\lnot f\Rel y$.) 
We call $\mathcal X$  an $\Rel$-unbounded family, if $\lnot (\exists g)\,(\forall x\in \mathcal X)x\Rel g$, and an $\Rel$-dominating family if $(\forall f)\,(\exists x\in \mathcal X)\, f\Rel x$. 
\begin{itemize}
   \item
Let $\mathfrak{b}_i$ be the minimal size of an $\Rel_i$-unbounded family,
   \item and let  $\mathfrak{d}_i$ be the minimal size
      of an $\Rel_i$-dominating family.
\end{itemize}

We only need the following connections between $\Ri$ and the cardinal characteristics: 
\begin{lemma}\label{lem:connection}
\begin{itemize}
\item[1.]
$\addnull=\mathfrak{b}_1$ and $\cofnull=\mathfrak{d}_1$.
\item[2.]
$\covnull\le\mathfrak{b}_2$ and $\nonnull\ge\mathfrak{d}_2$.
\item[3.]
$\mathfrak{b}=\mathfrak{b}_3$ and $\mathfrak{d}=\mathfrak{d}_3$.
\item[4.]
$\nonmeager=\mathfrak{b}_4$ and $\covmeager=\mathfrak{d}_4$.
\end{itemize}
\end{lemma}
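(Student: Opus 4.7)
The plan is to verify each item by appealing to standard combinatorial characterizations of the cardinal invariants of $\mathcal{N}$ and $\mathcal{M}$ in the style of Bartoszyński, as presented in~\cite{BJ}. Part~3 is immediate from the definitions: $\RIII$ is literally the eventual-dominance relation $\le^*$, whose bounding and dominating numbers are $\mathfrak{b}$ and $\mathfrak{d}$ by definition.

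For Part~2 I plan a direct Borel--Cantelli argument. For each $g \in 2^\omega$ set $N_g \coloneq \{f : \neg(f \RII g)\}$, the set of $f$ that agree with $g$ on the block $I_n$ for infinitely many $n$. Since $\{f : f\restriction I_n = g\restriction I_n\}$ has measure $2^{-|I_n|} = 2^{-(n+1)}$, and these are summable, $N_g$ is a null $G_\delta$. The defining condition is symmetric ($f \in N_g \iff g \in N_f$), so if $\mathcal X$ is $\RII$-unbounded then for every $g$ some $x \in \mathcal X$ satisfies $g \in N_x$; hence $\{N_x : x \in \mathcal X\}$ covers $2^\omega$, giving $\covnull \le \mathfrak{b}_2$. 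Dually, if $A$ is not $\RII$-dominating then $A \subseteq N_f$ for the witness $f$ and so $A$ is null, which contrapositively yields $\nonnull \ge \mathfrak{d}_2$.

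Parts~1 and~4 are the genuine Bartoszyński characterizations. For Part~4 the key input is that the family $\{M_y : y \in \omega^\omega\}$ with $M_y \coloneq \{f : f \neq^* y\}$ is cofinal in the meager ideal $\mathcal{M}$: each $M_y$ is an $F_\sigma$ meager set, and every meager set is contained in some $M_y$. A cover of $\omega^\omega$ by sets of the form $M_y$ is then literally an $\RIV$-dominating family, so $\covmeager = \mathfrak{d}_4$; and a set is non-meager iff it is contained in no single $M_y$, which unfolds directly to the $\RIV$-unboundedness condition, giving $\nonmeager = \mathfrak{b}_4$. Part~1 is the analogue for $\mathcal{N}$ and is the main obstacle: it rests on Bartoszyński's nontrivial coding of null sets by summable rational sequences in $\mathcal{C}$, yielding a cofinal family of null $G_\delta$-sets monotone under coordinatewise $\le^*$. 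Granting that machinery, $\addnull = \mathfrak{b}_1$ and $\cofnull = \mathfrak{d}_1$ read off directly, and I would simply cite the relevant results in~\cite[ch.~2]{BJ} rather than reproving them here.
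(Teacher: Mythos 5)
Your handling of parts 1--3 is fine and essentially matches the paper: part 3 is definitional, part 2 is the same direct Borel--Cantelli covering argument the paper gives (modulo the harmless slip that $2^{-|I_n|}=2^{-2^{n+1}}$, not $2^{-(n+1)}$ --- the sum is finite either way), and for part 1 you defer to~\cite{BJ}, exactly as the paper does.

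Part 4, however, rests on a false key lemma. The family $\{M_y:\,y\in\omega^\omega\}$ with $M_y=\{f:\,(\forall^*n)\ f(n)\neq y(n)\}$ is \emph{not} cofinal in the meager ideal. Concretely, let $A=\{f\in\omega^\omega:\,(\forall^*n)\ f(2n)=0\}$; this is a meager $F_\sigma$ set, but for any $y$ the function $f$ with $f(2n)=0$ and $f(2n+1)=y(2n+1)$ lies in $A$ and agrees with $y$ at every odd coordinate, so $A\not\subseteq M_y$. The same example kills the set-level biconditional you invoke (``a set is non-meager iff it is contained in no single $M_y$''): $A$ is meager yet $\RIV$-unbounded. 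The correct cofinal family requires interval partitions --- every meager set is contained in one of the form $\{f:\,(\forall^*n)\ f\restriction I_n\neq y\restriction I_n\}$ --- and the entire content of $\nonmeager=\mathfrak{b}_4$ and $\covmeager=\mathfrak{d}_4$ is the nontrivial combinatorial translation between that block-matching relation and the single-coordinate relation $\RIV$. What your argument actually yields (since each $M_y$ \emph{is} meager) is only the two easy inequalities $\covmeager\le\mathfrak{d}_4$ and $\nonmeager\ge\mathfrak{b}_4$. For the reverse inequalities you must either reproduce that translation or cite it outright, as the paper does via \cite[2.4.1~and~2.4.7]{BJ}; as written, your proof of part 4 does not go through.
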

\begin{proof}
(3) holds by definition. (1) can be found in~\cite[6.5.B]{BJ}.
(4) is a result of~\cite{MR671224,MR917147},  
cf.~\cite[2.4.1~and~2.4.7]{BJ}.

To prove (2), note that 
for fixed $f\in 2^\omega$ the set $\{g\in 2^\omega: \lnot f\RII g\} $
is a null set, call it $N_f$. 
Let $\mathcal F$ be an $\RII$-unbounded family.
Then $\{N_f:\, f\in\mathcal{F}\}$ covers $2^\omega$:
Fix $g\in 2^\omega$.
As $g$ does not bound $\mathcal{F}$, there is some 
$f\in \mathcal{F}$ unbounded by $g$, i.e., $g\in N_f$.
Let $X$ be a non-null set. 
Then $X$ is $\RII$-dominating: For any $f\in 2^\omega$
there is some $x\in X\setminus N_f$, i.e., $f \RII x$.
\end{proof}

We will also use:
\begin{lemma}\label{lem:connection2}\cite{BJ}
Amoeba forcing $\mathbb A$ adds a dominating element $\bar b$ of $\mathcal{C}$,
i.e., $\mathbb A\forces \bar q\RI \bar b$ for all $\bar q\in \mathcal{C}\cap V$.
\end{lemma}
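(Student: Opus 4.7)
The plan is to construct the dominating element $\bar b \in \mathcal C$ directly from the amoeba generic and verify the eventual-domination via a density argument. Recall that amoeba forcing $\mathbb A$ may be presented as the collection of open sets $U \subseteq 2^\omega$ with $\mu(U) < 1/2$, ordered by reverse inclusion; its key feature is that the generic union $\dot{\mathcal U} = \bigcup \dot G$ is an open set of measure exactly $1/2$ which, for any $\varepsilon > 0$ and any null set $N \in V$, contains a subset of measure $< \varepsilon$ covering $N$. Iterating this observation (or a suitable reformulation of amoeba forcing with rational parameter) yields an $\mathbb A$-name $\dot H = \bigcap_m \dot O_m$ for a single $G_\delta$ null set that covers \emph{every} ground-model null set, with $\mu(\dot O_m) < 2^{-m}$.

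First, I would fix a canonical bijection between $\mathcal C$ and Lebesgue null sets (in some chosen coding): to $\bar q = (q_n) \in \mathcal C$ associate $N_{\bar q} = \bigcap_m \bigcup_{n \geq m} B_n^{\bar q}$, where $B_n^{\bar q}$ is a canonical basic clopen set of measure $q_n$. Conversely, from the generic open sets $(\dot O_m)_m$ I would read off a name $\bar b = (\dot b_n) \in \mathcal C$ by grouping the basic components of the $\dot O_m$ in a prescribed order and recording their measures; the summability $\mu(\dot O_m) < 2^{-m}$ ensures $\sum_n \dot b_n \leq 1$, so $\bar b \in \mathcal C^{V[G]}$.

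The heart of the proof is the density claim: for every $\bar q \in \mathcal C \cap V$ and every $p \in \mathbb A$, there is $p' \leq p$ and $N \in \omega$ such that $p' \forces (\forall n \geq N)\, q_n \leq b_n$. Given $p$ with $\mu(p) < 1/2$, choose $N$ large enough that $\sum_{n \geq N} q_n < 1/2 - \mu(p)$, which is possible since $\bar q$ is summable; then $p \cup \bigcup_{n \geq N} B_n^{\bar q}$ is still an open set of measure $< 1/2$, hence a legal strengthening $p'$ of $p$. By construction of the decoding of $\bar b$ from the generic, $p'$ forces that the relevant basic components of $\dot{\mathcal U}$ witness $q_n \leq b_n$ for all $n \geq N$.

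The main obstacle is setting up the encoding/decoding between $\mathcal C$ and open subsets of $2^\omega$ in a way that makes both the construction of $\dot b_n$ and the bookkeeping in the density argument genuinely match — i.e., so that ``$B_n^{\bar q} \subseteq \dot{\mathcal U}$'' translates cleanly to an inequality $q_n \leq \dot b_n$ rather than merely to a bound on a partial sum. Once the coding is chosen carefully (ordering basic blocks by generation and measure), the rest is standard finite combinatorics with measures of open sets, as in the proof of the corresponding statement in \cite{BJ}.
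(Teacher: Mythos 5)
Your overall strategy --- extract a ``master'' null object from the amoeba generic and read off from it a dominating element of $\mathcal C$ --- is the same as the paper's. But the step you yourself flag as ``the main obstacle'' is a genuine gap, and it is exactly the point where the paper instead invokes the slalom characterization of the null ideal (\cite[2.3.3]{BJ}): amoeba adds a slalom $\mathcal S$ with $\sum_n |\mathcal S(n)|/n^2<\infty$ containing every ground-model slalom almost everywhere, and $\bar b$ is then obtained from the \emph{sizes} $|\mathcal S(n)|$ by an explicit normalization. Your direct open-set coding does not deliver the coordinatewise inequality $q_n\le b_n$. First, there is a mismatch of objects: your density argument forces $B_n^{\bar q}\subseteq \dot{\mathcal U}$ for $n\ge N$ (which is fine), but you defined $\bar b$ from the sets $\dot O_m$, and covering of null sets is a pointwise statement --- $\bigcap_m\bigcup_{n\ge m}B_n^{\bar q}\subseteq\bigcap_m \dot O_m$ does not give $B_n^{\bar q}\subseteq \dot O_m$ for any fixed $m$ and large $n$. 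Second, and more fundamentally, the indexing cannot be repaired. If $b_n$ is the measure of the $n$-th basic component of $\dot{\mathcal U}$ in any fixed enumeration, then $\sum_n b_n\le 1$ holds, but the generic merges and reorders components so that the piece containing $B_n^{\bar q}$ lands at an uncontrolled index $m(n)\neq n$; you only get $(\exists m)\,b_m\ge q_n$, which is not $\RI$, and different ground-model sequences (e.g.\ permuted versions of one another) would require incompatible alignments. If instead you repair the alignment by fixing pairwise disjoint clopen slots $S_n$ with $\mu(S_n)=2^{-n-1}$, placing $B_n^{\bar q}$ inside $S_n$ with relative measure $q_n$, and letting $b_n$ be the relative measure of $\dot{\mathcal U}\cap S_n$ in $S_n$, then the inequality $q_n\le b_n$ does follow from your density argument --- but now a routine density argument also shows that generically $b_n\ge 1/2$ for infinitely many $n$ (any condition has measure $<1/2$, so one can always pour relative measure $1/2$ into some far-out slot), hence $\sum_n b_n=\infty$ and $\bar b\notin\mathcal C$.

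In short: the conversion from ``amoeba adds a null set covering all old null sets'' to ``amoeba adds an $\RI$-dominating element of $\mathcal C$'' is precisely the nontrivial combinatorial content of Bartoszy\'nski's characterization of $\addnull$ and $\cofnull$; it is not finite bookkeeping with measures of open sets. To complete your proof you should either quote that characterization, as the paper does via slaloms, or reprove it (which requires the decomposition of null sets into ``small'' $F_\sigma$ pieces and is substantially more work than your sketch suggests). The paper's remaining step --- passing from the dominating slalom to $\bar b\in\mathcal C$ via $a_n=|\mathcal S(n)|/n^2$, $M=\sum a_n$, $b_n=a_{n+1}/M$ --- is the easy part, and is the analogue of the normalization you were aiming for.
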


\begin{proof}
Let us define a slalom $\mathcal S$ to be a function
$\mathcal S:\omega\to [\omega]^{{<}\omega}$ such that 
$|\mathcal S(n)|>0$ and 
$\sum_{n=1}^{\infty} \frac{|\mathcal S(n)|}{n^2}<\infty$.

Amoeba forcing will add a null set covering all
old null sets, and therefore (according to \cite[2.3.3]{BJ}) a slalom $\mathcal S$ covering all old slaloms. 
Set 
$a_n \coloneq \frac{|\mathcal S(n)|}{n^2}$, 
$M \coloneq \sum_{n=1}^\infty a_n$, set $M'$ the smallest natural number $\ge M$, 
and set $b_n \coloneq \frac{a_{n+1}}{M'}$. Then
it is easy to see that
$(b_n)_{n\in\omega}\in\mathcal C$  dominates every old  sequence $(q_n)_{n\in\omega}$ in~$\mathcal C$.
\end{proof}

\begin{definition}\cite{MR1071305}
Let $P$ be a ccc forcing, $\lambda$ an uncountable regular cardinal, and $\Rel$ as above.
$P$ is $(\Rel,\lambda)$-good,
if for each $P$-name $r\in\omega^\omega$ there is (in $V$) a nonempty
set $\mathcal Y\subseteq \omega^\omega$ of size ${<}\lambda$
such that every $f$ (in $V$) that is $\Rel$-unbounded by $\mathcal Y$ is forced to be $\Rel$-unbounded by $r$ as well.
\end{definition}

Note that $\lambda$-good trivially implies $\mu$-good if $\mu\ge\lambda$
are regular.

How do we get good forcings? Let us just note the following results:
\begin{lemma}\label{lem:gettinggood}
A finite support (henceforth abbreviated FS) iteration of Cohen forcing is good for any $(\Rel,\lambda)$, and
the composition of two $(\Rel,\lambda)$-good forcings is $(\Rel,\lambda)$-good.
\\
Assume that $(P_\alpha,Q_\alpha)_{\alpha<\delta}$ is a FS ccc iteration.
Then $P_\delta$ is $(\Rel,\lambda)$-good, if each $Q_\alpha$ is forced to satisfy the following:
\begin{itemize}
\item[1.] For $\Rel=\RI$:  $|Q_\alpha|<\lambda$, or $Q_\alpha$ is $\sigma$-centered, or  $Q_\alpha$ is a sub-Boolean-algebra of the random algebra.
\item[2.] For $\Rel=\RII$: $|Q_\alpha|<\lambda$, or $Q_\alpha$ is $\sigma$-centered.
\item[4.] For $\Rel=\RIV$: $|Q_\alpha|<\lambda$.
\end{itemize}
\end{lemma}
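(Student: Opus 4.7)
The statement bundles three preservation facts: (A) Cohen iterations are good for every $\Rel$; (B) two-step composition preserves goodness; (C) FS ccc iterations whose iterands satisfy the listed local hypothesis are good for the corresponding $\Rel_i$. My plan is to prove (C) first by reducing to one-step base cases, glue via (B) for successor stages, and handle limit stages to obtain the FS ccc conclusion. Claim (A) in the $\Rel_1,\Rel_2,\Rel_4$ cases then reduces to (C) because Cohen is countable and $\sigma$-centered; for general $\Rel$ one invokes the classical fact (using the nowhere-dense-levels assumption on $\Rel$) that Cohen adds no reals witnessing $\Rel$-unboundedness over the ground model reals, so an arbitrary singleton $\mathcal Y$ suffices.

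For the one-step base cases underlying (C), given a $Q$-name $\dot r$ I would construct $\mathcal Y \subseteq \omega^\omega$ in $V$ of size ${<}\lambda$ by combinatorial extraction. If $|Q|<\lambda$, for each $q\in Q$ pick a representative $y_q\in\omega^\omega$ by a canonical choice along a maximal antichain below $q$ deciding each coordinate of $\dot r$; for all three of $\Rel_1,\Rel_2,\Rel_4$ the coordinatewise structure of the relation lets $\mathcal Y=\{y_q:q\in Q\}$ witness goodness, via the density argument that for every $p$ and $k_0$ there is $p'\le p$ and $k\ge k_0$ with $p'\forces f(k)\,\textrm{matches}\,\dot r(k)$ against $y_p$. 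If $Q=\bigcup_{n<\omega}C_n$ is $\sigma$-centered, define one $y_n\in\omega^\omega$ per centered piece using that compatible conditions in $C_n$ decide each coordinate consistently, giving $|\mathcal Y|\le\aleph_0<\lambda$; this handles $\Rel_1$ and $\Rel_2$. For sub-Boolean-algebras of the random algebra and $\Rel=\Rel_1$, I would exploit the measure structure: for a random name $\dot{\bar q}\in\mathcal C$ the expectations $b_n=\Eor[\dot q_n]$ are summable since $\sum b_n=\Eor[\sum \dot q_n]\le 1$, and after rescaling/rounding produce $\bar b\in\mathcal C\cap V$ that $\Rel_1$-dominates $\dot{\bar q}$ in the $\forall^*$ sense.

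For (B), given $P*\dot Q$ and a name $\dot r$, the forced $\dot Q$-goodness gives a $P$-name $\dot{\mathcal Y}'$ for a family of size ${<}\lambda$; applying $P$-goodness entrywise to a $P$-name enumerating $\dot{\mathcal Y}'$ and collecting into $V$ yields the desired $\mathcal Y$, with correctness verified by unfolding the definitions. For the FS ccc limit at $\delta$, I would split by $\cf(\delta)$: if $\cf(\delta)>\omega$, ccc absorbs $\dot r$ into a $P_\beta$-name for some $\beta<\delta$ and the inductive hypothesis applies directly; if $\cf(\delta)=\omega$, fix $\beta_n\to\delta$ cofinally, note that by finite support plus ccc each value $\dot r(k)$ is (equivalent to) a $P_{\beta_{n(k)}}$-name, apply the inductive hypothesis at each $\beta_n$ to get $\mathcal Y_n$ of size ${<}\lambda$, and set $\mathcal Y=\bigcup_n\mathcal Y_n$, whose cardinality remains ${<}\lambda$ by regularity of $\lambda$.

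The technical heart is the random-algebra clause: the expectation argument must be packaged so that the extracted $\bar b$ genuinely lies in $\mathcal C$ (strictly positive rationals with sum ${\le}1$), which needs careful rounding and tail control on the $\Eor[\dot q_n]$, and one must verify that $\bar b$ dominates $\dot{\bar q}$ on a forced tail rather than just in measure. A secondary source of care is the $\cf(\delta)=\omega$ limit step with mixed-type iterands, since different $\beta_n$ may fall under different clauses of (C); uniformizing the form of the goodness witness across the induction makes the $\mathcal Y_n$ combine seamlessly at the limit.
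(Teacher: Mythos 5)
The paper does not reprove this lemma at all: it cites the Judah--Shelah preservation theorem for FS ccc iterations, the fact that ccc forcings of size ${<}\lambda$ are $(\Rel,\lambda)$-good (which covers Cohen and the $|Q_\alpha|<\lambda$ clause), and the published results for the two ``large'' iterands (Judah--Shelah/Kamburelis for subalgebras of random and $\RI$, Brendle for $\sigma$-centered and $\RII$). Your attempt to reconstruct everything from scratch is therefore a different route, and parts of it are sound: the small-forcing case via one canonically chosen real per condition does work for the coordinatewise relations $\RI,\RII,\RIV$ (any value that $p$ permits for $\dot r(k)$ inherits whatever inequality $p$ forces between $f(k)$ and $\dot r(k)$), and your composition step is the standard argument.

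However, three of your steps have genuine gaps. First, the $\sigma$-centered case: compatible conditions in a centered piece $C_n$ cannot force contradictory values of $\dot r(k)$, but most conditions decide nothing, so your $y_n$ is arbitrary on the undecided coordinates; for $\RII$ you need $f\restriction I_k\neq y_n\restriction I_k$ on the whole block $I_k$, and a single undecided coordinate destroys the density argument. This case is the main theorem of Brendle's cited paper and requires a counting argument over the \emph{sets} of possible values of $\dot r\restriction I_k$ below members of $C_n$, not one real per piece. Second, the random-subalgebra case: summability of the expectations $b_n$ of $\dot q_n$ does not give $\dot q_n\le b_n$ on a tail (a name can exceed its mean infinitely often with probability one), and in general \emph{no} single $\bar b\in\mathcal C\cap V$ dominates $\dot{\bar q}$; the correct witness is a countable family indexed by rational $\varepsilon>0$, obtained from Markov's inequality (if a condition of measure ${\ge}\varepsilon$ forces $f(n)\le\dot q_n$ for $n\ge m$, then $f(n)\le b_n/\varepsilon$ there). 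Note that goodness only asks that $f$'s bounded by $\dot r$ be bounded by some member of $\mathcal Y$, not that $\mathcal Y$ dominate $\dot r$. Third, the limit step with $\cf(\delta)=\omega$: goodness of $P_{\beta_n}$ applies to $P_{\beta_n}$-names, and $\dot r$ is not one; knowing that each coordinate $\dot r(k)$ is a $P_{\beta_{n(k)}}$-name does not tell you which name to feed to the inductive hypothesis at stage $\beta_n$. One must work with interpretations of $\dot r$ below conditions of $P_{\beta_n}$ and fuse across the $\beta_n$ --- this is precisely the hard part of the Judah--Shelah preservation theorem, and ``take $\mathcal Y=\bigcup_n\mathcal Y_n$'' is not a substitute for it. (A smaller point: a singleton $\mathcal Y$ does not witness goodness of Cohen forcing for an arbitrary name; the correct statement is that each countable iterand is good, and one then invokes preservation.)
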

(Remark: For $\RIII$ the same holds as for $\RIV$, which 
however is of no use for our construction.)

\begin{proof}
$(\Rel,\lambda)$-goodness is preserved by FS ccc iterations (in particular compositions),
as proved in~\cite{MR1071305}, cf.~\cite[6.4.11--12]{BJ}. 
Also, ccc forcings of size ${<}\lambda$ are $(\Rel,\lambda)$-good~\cite[6.4.7]{BJ};
which takes care of the case $|Q_\alpha|<\lambda$ (and in particular of  Cohen forcing).
So it remains to show that (for $i=1,2$) the ``large'' iterands in the list are
$(\Rel_i,\lambda)$-good. 

For $\RI$ 
this follows from~\cite{MR1071305} and~\cite{MR1022984}, cf.~\cite[6.5.17--18]{BJ}.

For $\RII$, this is proven in~\cite{MR1129144}, and as the proof is very short, we give it here:
Write $Q_\alpha$ as union $\bigcup_{k\in\omega}Q^k$ of centered sets.
Given the $Q_\alpha$-name $r$, pick a countable elementary submodel $N$ containing $r$ and $Q_\alpha$, and 
set $\mathcal Y=N\cap 2^\omega$. Assume towards a contradiction that $f$ is unbounded by
$\mathcal Y$, but is forced by $p_0$ to be bounded by $r$, i.e., $p_0$ forces $(\forall n>n_0)f\restriction I_n\ne r\restriction I_n$.
Now $p_0$ may not be in $N$, but there is some $k_0\in \omega$ such that 
$p_0\in Q^{k_0}$.
In $N$, we can pick for each $n\in\omega$ some $s_n\in 2^{I_n}$ such that no
$q\in Q^{k_0}$ forces $r\restriction I_n\ne s_n$. (There are only finitely many $s\in 2^{I_n}$;
if each $s$ is forbidden by some $q$, then the common stronger element would prevent
all possibilities for $r\restriction I_n$.) So in $N$, we get some $g\in 2^\omega$ such that
$g\restriction I_n=s_n$. As $f$ is unbounded by $\mathcal Y$ (or equivalently: by $N$), there is some $n>n_0$ such that
$f\restriction I_n=g\restriction I_n=s_n$, which implies that $p_0$ (as an element of $Q^{k_0}$)
does not force $r\restriction I_n \ne f\restriction I_n$, a contradiction.
\end{proof}

\begin{lemma}\label{lem:coboundedunbounded}
Let $\lambda\le\kappa\le\mu$ be uncountable regular cardinals.
After forcing with $\mu$ many Cohen reals $(c_\alpha)_{\alpha\in \mu}$, followed by 
an $(\Rel,\lambda)$-good forcing, we get: For every real $r$ in the final extension, the set 
$\{\alpha\in \kappa:\, c_\alpha\text{ is unbounded by }r\}$ is cobounded in $\kappa$. I.e., 
$(\exists \alpha\in\kappa)\, (\forall \beta\in \kappa\setminus \alpha)\, \lnot c_\beta\Rel r$.
\end{lemma}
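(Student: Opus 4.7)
The plan is to exploit the $(\Rel,\lambda)$-goodness of the second-stage forcing (call it $Q$) in the Cohen extension $V_1 \coloneq V[(c_\alpha)_{\alpha<\mu}]$, together with the mutual Cohen-genericity of distinct coordinates.

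First I would fix a $Q$-name $r$ in $V_1$ for a real in the final extension. Applying $(\Rel,\lambda)$-goodness of $Q$ over $V_1$ yields $\mathcal{Y}\in V_1$ of cardinality ${<}\lambda$ such that every $f\in V_1$ that is $\Rel$-unbounded by $\mathcal{Y}$ is forced by $Q$ to be $\Rel$-unbounded by $r$. By ccc of $\mathbb{C}_\mu$ together with $|\mathcal{Y}|<\lambda$ and regularity of $\lambda$, one can pick in $V$ a $\mathbb{C}_\mu$-name for $\mathcal{Y}$ whose support $T\subseteq\mu$ satisfies $|T|<\lambda$; then $\mathcal{Y}\in V[(c_\alpha)_{\alpha\in T}]$. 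Since $\kappa\geq\lambda$ is regular and $|T\cap\kappa|<\kappa$, there is $\alpha_0<\kappa$ with $T\cap\kappa\subseteq\alpha_0$.

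For every $\alpha\in[\alpha_0,\kappa)$ we have $\alpha\notin T$, so $c_\alpha$ is Cohen-generic over $V[(c_\beta)_{\beta\in T}]$ by mutual genericity of distinct coordinates. For each $y\in\mathcal{Y}\subseteq V[(c_\beta)_{\beta\in T}]$, the set $\{f:f\Rel y\}=\bigcup_n\{f:f\Rel^n y\}$ is meager (by the Assumption, each $\Rel^n$ has nowhere-dense vertical sections), so $c_\alpha$ avoids it and $\lnot c_\alpha\Rel y$. Hence $c_\alpha$ is $\Rel$-unbounded by $\mathcal{Y}$ in $V_1$, and goodness forces $\lnot c_\alpha\Rel r$ in the full extension, which is the desired cobounded set.

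The main (minor) hurdle is the name-support step: given a $V_1$-set $\mathcal{Y}$ of size ${<}\lambda$, one picks in $V$ a $\mathbb{C}_\mu$-name that enumerates $\mathcal{Y}$ via ${<}\lambda$ many nice real names, each having countable support by ccc, and uses regularity of $\lambda$ to bound the union of their supports. Everything else reduces to unpacking goodness and the elementary fact that Cohen reals avoid ground-model meager sets.
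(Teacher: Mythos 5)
Your proof is correct, and the core mechanism is the same as the paper's: goodness produces a small set $\mathcal{Y}$, regularity of $\kappa\ge\lambda$ plus ccc localizes $\mathcal{Y}$ to an initial segment of the Cohen coordinates below $\kappa$, and the remaining Cohen reals avoid the meager sets $\{f: f\Rel y\}$, hence are unbounded by $\mathcal{Y}$ and therefore (by goodness) by $r$. The one genuine difference is where goodness is invoked: the paper works in the intermediate model $V_\kappa$ after the first $\kappa$ Cohens and applies goodness to the entire tail (the remaining $\mu\setminus\kappa$ Cohens composed with the good forcing), which requires the preservation facts of Lemma~\ref{lem:gettinggood} (Cohen iterations are good, and compositions of good forcings are good) to know that this tail is itself good; the set $\mathcal{Y}$ then lives in $V_\kappa$ and is pushed down into some $V_\alpha$ by counting supports of nice names. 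You instead apply goodness only to the final factor $Q$ over the full Cohen extension $V_1$, and then pull $\mathcal{Y}$ back to $V[(c_\beta)_{\beta\in T}]$ for a support set $T$ of size ${<}\lambda$, using mutual genericity of the coordinates outside $T$ (including, harmlessly, coordinates of $T$ above $\kappa$). Your route is slightly more self-contained, needing only the product structure of Cohen forcing and the standard support bound for nice names rather than the composition-preservation lemma; the paper's route is shorter given that Lemma~\ref{lem:gettinggood} is already available. Both arguments are complete and correct.
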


(The 
Cohen real $c_\beta$ can
be interpreted both as Cohen generic element of  $2^\omega$ 
and as Cohen generic element of  $\omega^\omega$; we 
use the interpretation suitable for the relation $\Rel$.)

\begin{proof}
Work in the intermediate extension after $\kappa$ many Cohen reals; let us call it $V_\kappa$.
The remaining forcing (i.e., $\mu\setminus\kappa$ many Cohens composed with the good forcing) is good; so applying the definition we get 
(in $V_\kappa$)
a set $\mathcal{Y}$ of size ${<}\lambda$. 

As the initial Cohen extension is ccc, and $\kappa\ge \lambda$ is regular,
we get some $\alpha\in\kappa$ such that each element $y$ of $\mathcal{Y}$
already exists in the extension by the first $\alpha$ many Cohens, call it
$V_{\alpha}$.
The set of reals $M_y$ bounded by $y$ is meager (and absolute).
Any $c_\beta$ for $\beta\in\kappa\setminus \alpha$
is Cohen over $V_{\alpha}$, and therefore not in $M_y$, i.e., not bounded by $y$,
i.e., not by $\mathcal Y$.
So according to the definition of good, each such $c_\beta$ is 
unbounded by $r$ as well, for the given $r$.
\end{proof}

In the light of this result, let us revisit Lemma~\ref{lem:connection} 
with some new notation, the ``linearly cofinally unbounded'' property $\mylin$:
\begin{definition}\label{def:linear}
For $i=1,2,3,4$, $\gamma$ a limit ordinal,  and $P$ a ccc forcing notion, let $\mylini(P,\gamma)$  stand for:
\begin{quote}
There is a sequence 
$(x_\alpha)_{\alpha\in\gamma}$ of $P$-names of elements of $X_i$ (the domain of the relation $\Ri$) such that 
for every such $P$-name $y$\\
$(\exists \alpha\in\gamma)\, (\forall \beta\in \gamma\setminus \alpha)\,P\forces \lnot x_\beta \Ri y$.
\end{quote}
\end{definition}

\begin{lemma}\label{lem:linearcharacteristics}
\begin{itemize}
\item 
$\mylini(P,\delta)$
is equivalent 
to $\mylini(P,\cf(\delta))$.
\item
If  $\lambda$ is regular, then
$\mylini(P,\lambda)$ 
implies $\mathfrak{b}_i\le\lambda$ and $\mathfrak{d}_i\ge\lambda$. 
\end{itemize}
In particular: 
\begin{itemize}
\item[1.]
$\mylinI(P,\lambda)$ implies  $P\forces(\,\addnull\le\lambda\,\&\,\cofnull\ge\lambda\,)$.
\item[2.] 
$\mylinII(P,\lambda)$ implies  $P\forces(\,\covnull\le\lambda\,\&\,\nonnull\ge\lambda\,)$.
\item[3.] 
$\mylinIII(P,\lambda)$ implies  $P\forces(\,\mathfrak{b}\le\lambda\,\&\,\mathfrak{d}\ge\lambda\,)$.
\item[4.] 
$\mylinIV(P,\lambda)$ implies  $P\forces(\,\nonmeager\le\lambda\,\&\,\covmeager\ge\lambda\,)$.

\end{itemize}
\end{lemma}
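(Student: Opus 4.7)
The plan is to treat the three bullet points separately, noting that the four numbered consequences are immediate once the regular-$\lambda$ statement is known, by substituting $\mathfrak b_i,\mathfrak d_i$ with the characteristics on the left-hand sides via Lemma~\ref{lem:connection}.

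First, for the equivalence $\mylini(P,\delta)\Leftrightarrow\mylini(P,\cf(\delta))$: pick an increasing cofinal sequence $(\delta_\xi)_{\xi<\cf(\delta)}$ in $\delta$ with $\delta_0=0$. Given a witness $(x_\alpha)_{\alpha<\delta}$ for $\mylini(P,\delta)$, the subsequence $x'_\xi := x_{\delta_\xi}$ witnesses $\mylini(P,\cf(\delta))$: for any name $y$ with threshold $\alpha\in\delta$, any $\xi$ large enough that $\delta_\xi\geq\alpha$ lies in the forced unbounded tail. Conversely, given a witness $(x_\xi)_{\xi<\cf(\delta)}$ for $\mylini(P,\cf(\delta))$, define $x'_\alpha:=x_\xi$ whenever $\alpha\in[\delta_\xi,\delta_{\xi+1})$; then for a name $y$ with threshold $\xi_0$, every $\alpha\geq\delta_{\xi_0}$ lands in some block with index $\geq\xi_0$, giving the tail-unboundedness required.

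For the second item, assume $\mylini(P,\lambda)$ with $(x_\alpha)_{\alpha<\lambda}$ witnessing it. For $\mathfrak b_i\leq\lambda$, I would argue in $V[G]$: given any $y\in\omega^\omega$ (or $\mathcal C$, resp.) in the extension, take a $P$-name $\dot y$ for it; by the defining property of $\mylini$ there is $\alpha<\lambda$ such that in particular $P\forces \lnot x_\alpha\,\Ri\,\dot y$, so $\{x_\alpha^G:\alpha<\lambda\}$ is not $\Ri$-bounded by $y$; hence it is an $\Ri$-unbounded family of size $\leq\lambda$. For $\mathfrak d_i\geq\lambda$, suppose towards contradiction that in $V[G]$ there is an $\Ri$-dominating family $\{y_\eta^G:\eta<\kappa\}$ with $\kappa<\lambda$, given by names $\dot y_\eta$. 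For each $\eta$ the witness produces $\alpha_\eta<\lambda$ with $P\forces(\forall\beta\in\lambda\setminus\alpha_\eta)\,\lnot x_\beta\,\Ri\,\dot y_\eta$. Since $\lambda$ is regular and $\kappa<\lambda$, the supremum $\alpha^\ast:=\sup_{\eta<\kappa}\alpha_\eta$ is still below $\lambda$, so $x_{\alpha^\ast}$ is forced to be $\Ri$-unbounded by every $\dot y_\eta$, contradicting that the family dominates.

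Finally, the four numbered statements follow by reading off $\mathfrak b_i$ and $\mathfrak d_i$ through Lemma~\ref{lem:connection}: parts (1), (3), (4) give direct equalities with the appropriate characteristics, while part (2) gives $\covnull\leq\mathfrak b_2$ and $\nonnull\geq\mathfrak d_2$, which combine with $\mathfrak b_2\leq\lambda\leq\mathfrak d_2$ to yield the stated inequalities. I do not expect a genuine obstacle here; the only mildly delicate point is the bookkeeping when stretching a $\cf(\delta)$-sequence to a $\delta$-sequence, and the use of regularity of $\lambda$ to keep the supremum $\alpha^\ast$ below $\lambda$ in the dominating-family argument.
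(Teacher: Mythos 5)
Your proposal is correct and follows essentially the same route as the paper: the cofinality equivalence via taking a subsequence in one direction and stretching by blocks $[\delta_\xi,\delta_{\xi+1})$ in the other, unboundedness of the witnessing family directly from the definition, and the regularity-of-$\lambda$ supremum argument to defeat any would-be dominating family of size ${<}\lambda$, with the numbered items read off through Lemma~\ref{lem:connection}. No gaps.
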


\begin{proof} Assume that $(\alpha_\beta)_{\beta\in\cf(\delta)}$ is increasing continuous and cofinal
in $\delta$.
If $(x_\alpha)_{\alpha\in \delta}$
witnesses $\mylini(P,\delta)$,  then $(x_{\alpha_\beta})_{\beta\in\cf(\delta)}$ witnesses 
$\mylini(P,\cf(\delta))$. And 
if $(x_\beta)_{\beta\in\cf(\delta)}$ witnesses 
$\mylini(P,\cf(\delta))$, then  $(y_\alpha)_{\alpha\in\delta}$
witnesses $\mylini(P,\delta)$, where 
$y_{\alpha}\coloneq x_\beta$ for $\alpha\in [\alpha_\beta,\alpha_{\beta+1})$.

The set $\{x_\alpha:\, \alpha\in\lambda\}$ is certainly forced to be $\Rel_i$-unbounded;
and given a set $Y=\{y_j:\, j<\theta\}$ of $\theta<\lambda$ many $P$-names,
each has a bound $\alpha_j\in\lambda$ so that 
$(\forall \beta\in \lambda\setminus \alpha_j)\,P\forces \lnot x_\beta \Ri y_j$, so for any $\beta\in\lambda$ above all $\alpha_j$ we get
$P\forces \lnot x_\beta \Ri y_j$ for all $j$; i.e., $Y$ cannot be dominating.
\end{proof}

\begin{remark} 
$\mathfrak b_i\le \lambda $ is equivalent to the existence of a sequence 
 $(x_\alpha:\, \alpha\in\lambda)$ with the property 
 $(\forall y)\,( \exists \alpha)\, \lnot\bigl( x_\alpha R_i y\bigr)$; such a sequence
 might be called a ```witness'' for $\mathfrak b_i\le \lambda $.  In $\mylin$ we demand a stronger property;  a sequence $(x_\alpha:\alpha<\lambda)$ with this stronger property 
 could informally be called a ``strong witness'' for $\mathfrak b_i\le \lambda $. Similarly, the next subsection introduces a different
 notion, $\mypart$, corresponding to ``strong witnesses'' for 
 $\mathfrak d_i\le \mu$.
\end{remark}

\subsection{The inital forcing \texorpdfstring{$\Pa$}{P5}: Partial forcings and the \texorpdfstring{$\mypart$}{COB} property}\label{ss:initalforcing}

Assume we have a forcing iteration $(P_\beta,Q_\beta)_{\beta<\alpha}$ with limit $P_\alpha$, where each $Q_\beta$ is forced by $P_\beta$ to be a set of reals such that
the generic filter of $Q_\beta$ is determined (in a Borel way)\footnote{More specifically, we require that the Borel function for $Q_\beta$ is already fixed in the ground model.
For example, assume $Q_\beta$ is random forcing, defined as the set of all  positive pruned trees $T$,
i.e., trees $T\subseteq 2^{{<}\omega}$ without leaves such that $[T]$ has positive measure. Then the generic filter $G$ for this forcing
is determined by the generic real $\eta$ (the random real),
and $G$ consists of those trees $T$ such that $\eta\in [T]$, which is a Borel relation.
See~\cite[Sec.~1.2]{ourselves} for a formal definition and more details.} from some generic real $\eta_\beta$.
Fix some $w\subseteq \alpha$. We define the $P_\alpha$-name 
$Q_\alpha$ to consist of all random forcing conditions that can be Borel-calculated from generics at $w$ alone.

More explicitly:
\begin{definition}\label{def:groundmodelsequence}
\begin{enumerate}
    \item $q$ is in $Q_\alpha$ if there are in the ground model $V$ 
a countable subset $u\subseteq w$ and 
a Borel function $B:\mR^ u\to \mR $
such that 
$q=B(\, (\eta_\beta)_{\beta\in u} \, ) $ is a random condition.

Being a random condition is a Borel property (if
we fix some suitable representation of random forcing).
Accordingly, we can restrict ourselves to the 
case that $B$ is a Borel function whose image 
consists of random conditions only.
\item We call a pair $(B,u)$ as above ``a $w$-groundmodel-code'' or just ``code''.
Note that this code is a ground model object.
$Q_\alpha$ consists exactly of the evaluations of such codes.
\item\label{item:det}
We call a condition $(p,q)\in P_\alpha*Q_\alpha$
``determined at position $\alpha$'',
if there is a code $(B,u)$ such that $p$ forces that
$(B,u)$ is a code for $q$.
(Note that generally we only have  
a $P_\alpha$-name for a code.)
Given some $(p,q)$, we can obviously find $p'\le p$
such that $(p',q)$ is determined at $\alpha$.
\item\label{item:gms} 
We will later also consider 
so-called ``groundmodel-code-sequences'' 
for elements of $Q_\alpha$, that is (in $V$) a sequence $(B_n,u_n)_{n\in\omega}$  of codes,
where  $u_n$ is in $w_\alpha$.
Of course not every $\omega$-sequence 
of $Q_\alpha$-conditions in the $P_\alpha$-extension is described by a ground model sequence. (In particular, there will only be few ground model sequences, but many
new $\omega$-sequences in the extension.)
\end{enumerate}
\end{definition}
Clearly, in the $P_\alpha$ extension, $Q_\alpha$ is a subforcing (not necessarily a complete one) of the full random forcing,
and if $p,q$ in $Q_\alpha$ are incompatible in $Q_\alpha$
then they are incompatible in random forcing. (Two 
compatible conditions $p,q$ have a canonical conjunction $p\wedge q$ (the intersection),
and if $p$ and $q$ are both Borel-calculated from $w$, then so is the intersection.) 
In particular $Q_\alpha$
is ccc.

We call this forcing ``partial random forcing defined from $w$''.
Analogously, we define the ``partial Hechler'',
``partial eventually different''\footnote{See \ref{def.eor} for the definition.}
and ``partial amoeba'' forcings (and the same argument 
shows that these forcings are also ccc).

Assume that $\lambda $  is regular uncountable and that $\mu<\lambda$
implies $\mu^{\aleph_0}<\lambda$.
Then $|w|<\lambda$ implies that the sizes of the partial forcings
defined by $w$ are
${<}\lambda$.

We will assume the following throughout the paper:
\begin{assumption}\label{asm:P}
$\aleph_1<\lambda_1<\lambda_2<\lambda_3<\lambda_4<\lambda_5$ are regular cardinals
such that $\mu<\lambda_i$
implies $\mu^{\aleph_0}<\lambda_i$. 
Furthermore, 
$\lambda_3$ is the successor of a regular cardinal $\chi$ with $\chi^{\aleph_0}=\chi$,
and $\lambda_5^{<\lambda_4}=\lambda_5$.

We set $\delta_5=\lambda_5+\lambda_5$, and 
partition $\delta_5\setminus\lambda_5$ into unbounded sets $S^1$, $S^2$, $S^3$ and $S^4$.
Fix for each $\alpha\in \delta_5\setminus\lambda_5$ some $w_\alpha\subseteq \alpha$ such that each
$\{w_\alpha:\, \alpha\in S^i\}$ is cofinal\footnote{i.e.,
if $\alpha\in S^i$ then $|w_\alpha|<\lambda_i$,
and for all $u\subseteq \delta_5$, $|u|<\lambda_i$
there is some $\alpha\in S^i$ with $w_\alpha\supseteq u$.} 
in $[\delta_5]^{{<}\lambda_i}$.
\end{assumption}
The reader can assume that $(\lambda_i)_{i=1,\dots,5}$,
$(S^i)_{i=1,\dots,4}$  as well as
$(w_\alpha)_{\alpha\in S^i}$ for $i=1,2,3$ have been fixed once and 
for all (let us call them ``fixed parameters''), 
whereas we will investigate various possibilities for
$\bar w=(w_\alpha)_{\alpha\in S^4}$ in the following Subsections~\ref{ss:no-gch} and \ref{ss:recovering}.
(We will call such a $\bar w$ that satisfies the assumption a ``cofinal parameter''.)

\begin{definition}\label{def:Pa}
Let
$\Pa=(P_\alpha,Q_\alpha)_{\alpha\in\delta_5}$ 
be the 
FS iteration where 
$Q_\alpha$ is Cohen forcing for $\alpha\in \lambda_5$, and 
\begin{flalign*}
&&
Q_\alpha\text{ is the partial }
\left\{
\begin{array}{c}
\text{amoeba}\\
\text{random}\\
\text{Hechler}\\
\text{eventually different}\\
\end{array}\right\}
\text{ forcing defined from $w_\alpha$ if $\alpha$ is in}
\left\{
\begin{array}{l}
S^1\\
S^2\\
S^3\\
S^4\\
\end{array}
\right.
\\
\end{flalign*}
\end{definition}

According to Lemma~\ref{lem:gettinggood} $\Pa$ is  $(\lambda_i,\Rel _i)$-good
for $i=1,2,4$, 
so Lemmas~\ref{lem:coboundedunbounded} and~\ref{lem:linearcharacteristics} give us:

\begin{lemma}\label{lem:linearPapartial}
$\mylini(\Pa,\kappa)$ holds for $i=1,2,4$ and each regular cardinal $\kappa$ in $[\lambda_i,\lambda_5]$.
\end{lemma}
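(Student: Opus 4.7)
The plan is to take the witness sequence $(x_\alpha)_{\alpha\in\kappa}$ to be the Cohen reals $(c_\alpha)_{\alpha\in\kappa}$ added at the first $\kappa$ coordinates of $\Pa$ (interpreted in $\mathcal C$, $2^\omega$, or $\omega^\omega$ as dictated by $\Ri$), and to verify the defining property via the goodness machinery. Concretely, I factor $\Pa$ as $C_{\lambda_5}*R$, where $C_{\lambda_5}$ is Cohen of length $\lambda_5$ (the first $\lambda_5$ coordinates) and $R$ is the FS iteration of the partial forcings indexed by $\delta_5\setminus\lambda_5$. Using Lemma~\ref{lem:gettinggood}, I check that $R$ itself is $(\Ri,\lambda_i)$-good for $i=1,2,4$: the partial amoebas on $S^1$ have size ${<}\lambda_1\le\lambda_i$; the partial randoms on $S^2$ have size ${<}\lambda_2$ and are sub-Boolean-algebras of random (so they qualify for all three relations); and partial Hechler resp.\ partial eventually different forcings are $\sigma$-centered (covering $i=1,2$) and of size ${<}\lambda_3$ resp.\ ${<}\lambda_4$ (covering $i=4$).

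Given any $\Pa$-name $y$, I apply the definition of $(\Ri,\lambda_i)$-goodness of $R$ in $V[C_{\lambda_5}]$: this produces a set $\mathcal Y$ of fewer than $\lambda_i$ reals such that every real in $V[C_{\lambda_5}]$ which is $\Ri$-unbounded by $\mathcal Y$ is $R$-forced to be $\Ri$-unbounded by $y$. Pick a $C_{\lambda_5}$-name $\dot{\mathcal Y}$ for $\mathcal Y$ in $V$; since $C_{\lambda_5}$ is ccc, each real in $\mathcal Y$ has a $C_{\lambda_5}$-name with countable support, so the total support $S$ of $\dot{\mathcal Y}$ in $\lambda_5$ has size ${<}\lambda_i$. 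Then $S\cap\kappa$ has size ${<}\lambda_i\le\kappa$, and by regularity of $\kappa$ it is bounded in $\kappa$ by some $\alpha_0<\kappa$ chosen in $V$. By construction, $\dot{\mathcal Y}$ is a name that depends only on the Cohens at coordinates $\alpha_0\cup[\kappa,\lambda_5)$.

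For every $\beta\in[\alpha_0,\kappa)$, the Cohen real $c_\beta$ is therefore Cohen-generic over any model that contains $\mathcal Y$; the Assumption on $\Ri$ ensures that, for each $y_0\in\mathcal Y$, the section $\{z:z\Ri y_0\}$ is a meager (countable union of nowhere dense) set, which $c_\beta$ avoids. Hence $c_\beta$ is $\Ri$-unbounded by $\mathcal Y$, and by the goodness property $R$ forces $c_\beta$ to be $\Ri$-unbounded by $y$; a fortiori $\Pa\forces\lnot c_\beta\Ri y$. This gives exactly the definition of $\mylini(\Pa,\kappa)$ with $\alpha_0$ chosen in $V$.

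The main obstacle is purely bookkeeping: checking that each iterand of $R$ satisfies the appropriate clause of Lemma~\ref{lem:gettinggood} for every $i\in\{1,2,4\}$, and then carefully factoring $\Pa$ so that the $\alpha_0$ supplied by the proof of Lemma~\ref{lem:coboundedunbounded} genuinely lives in $V$ rather than in an intermediate extension. Both steps hinge on the chain $\lambda_i\le\kappa\le\lambda_5$ together with the cardinal arithmetic built into Assumption~\ref{asm:P}.
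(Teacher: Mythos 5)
Your proof is correct and is essentially the paper's argument: the paper simply observes that each iterand of $\Pa$ falls under one of the clauses of Lemma~\ref{lem:gettinggood}, so $\Pa$ is $(\Rel_i,\lambda_i)$-good for $i=1,2,4$, and then invokes Lemma~\ref{lem:coboundedunbounded} (with $\lambda=\lambda_i$, $\mu=\lambda_5$) to conclude that the first $\kappa$ Cohen reals witness $\mylini(\Pa,\kappa)$. You merely inline the proof of Lemma~\ref{lem:coboundedunbounded}, factoring at $\lambda_5$ and tracing the support of the name for $\mathcal Y$ back below $\kappa$ instead of working in the intermediate extension $V_\kappa$; this is the same mechanism and all your case checks on the iterands are accurate.
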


So in particular, $\Pa$ forces $\addnull\le\lambda_1$, $\covnull\le\lambda_2$, $\nonmeager\le\lambda_4$ and $\covmeager=\nonnull=\cofnull=\lambda_5=2^{\aleph_0}$;
i.e., the respective characteristics in the left half of Cicho\'n's diagram are small enough. 
It is easy to see that they are also large enough:

For example,  the partial amoebas and the fact that
$(w_\alpha)_{\alpha\in S^1}$ is cofinal ensure that
$\Pa$ forces $\addnull\ge \lambda_1$:
Let $(N_k)_{k\in\mu}$, $\aleph_1\le\mu<\lambda_1$ be a family of $\Pa$-names of null sets.
Each $N_k$ is a Borel-code, i.e., a real, i.e., 
a sequence of natural numbers, each of which is decided by a maximal antichain
(labeled with natural numbers). Each condition in such an antichain has finite support,
hence only uses finitely many coordinates in $\delta_5$.
So all in all we get a set $w^*$ of size ${\le}\mu$ that already decides all $N_k$.
(I.e., for each $k\in \mu$ there are a Borel function $B$ in $V$ and 
a sequence $(\alpha_j)_{j\in\omega}$ in $V$ of elements of $w^*$ 
such that $N_k=B(\eta_{\alpha_0},\eta_{\alpha_1},\dots)$.)
There is some $\beta\in S^1$ such that $w_\beta\supseteq w^*$, 
and the partial amoeba forcing at $\beta$ sees all the null sets $N_k$ and therefore 
covers their union.

We will reformulate this in a slightly cumbersome manner
that can be conveniently used later on, using the ``cone of bounds'' property $\mypart$:

\begin{definition}\label{def:partial}
For a ccc forcing notion $P$, regular uncountable cardinals $\lambda,\mu$
and 
$i=1,3,4$, let $\myparti(P,\lambda,\mu)$ stand for:
\begin{quote}
There are a ${<}\lambda$-directed partial order $(S,\prec)$
of size $\mu$
and a sequence $(g_s)_{s\in S}$ of $P$-names for reals
such that for each $P$-name $f$ of a real
$(\exists s\in S)\,(\forall t\succ s)\, P\forces f \Ri g_t $.
\end{quote}
\end{definition}
So $s$ is the tip of a cone that consists of elements bounding $f$.
\begin{lemma}\label{lem:temp}
For $i=1,3,4$,
$\myparti(P,\lambda,\mu)$ implies 
$P\forces(\, \mathfrak{b}_i\ge\lambda \,\&\, \mathfrak{d}_i\le \mu\,)$.
\end{lemma}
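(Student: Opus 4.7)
The plan is to unpack both inequalities directly from the definition of $\myparti(P,\lambda,\mu)$. Fix a witness $(S,\prec)$, $(g_s)_{s\in S}$ and work in $V^P$ throughout.

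For the upper bound $\mathfrak{d}_i\le\mu$, I would verify that the family $\{g_s:s\in S\}$ is $\Ri$-dominating in $V^P$: it has size $\le\mu$, and given any real $f\in V^P$, pick a $P$-name $\dot f$, apply the defining property to get $s\in S$ with $(\forall t\succ s)\,P\forces \dot f\Ri g_t$, and choose any such $t$ (which exists since $S$ is $<\lambda$-directed and nontrivial, or by reading $\succ$ as $\succeq$ at a maximal element if there is one). Then $f\Ri g_t$, so $\mathfrak{d}_i\le|\{g_t\}_{t\in S}|\le\mu$.

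For the lower bound $\mathfrak{b}_i\ge\lambda$, I would take an arbitrary $\mathcal F\subseteq X_i$ in $V^P$ with $|\mathcal F|=\kappa<\lambda$, enumerate $\mathcal F=\{f_\alpha:\alpha<\kappa\}$, and using ccc of $P$ fix $P$-names $\dot f_\alpha$. Applying the definition to each $\dot f_\alpha$ produces $s_\alpha\in S$. Since $\kappa<\lambda$ and $(S,\prec)$ is $<\lambda$-directed, the set $\{s_\alpha:\alpha<\kappa\}$ has a common (strict) upper bound $t\in S$, so $P\forces f_\alpha\Ri g_t$ for every $\alpha<\kappa$. Thus $g_t$ bounds $\mathcal F$, proving $\mathcal F$ is not $\Ri$-unbounded and hence $\mathfrak{b}_i\ge\lambda$.

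There is no substantive obstacle; the whole lemma is essentially a restatement of the definition, designed so that the ``cone tips'' $s$ can be amalgamated by directedness. The only minor point worth mentioning is the strict-versus-nonstrict reading of $\succ$, which is harmless: either the order has strict upper bounds for small subsets (the generic case, given $|S|=\mu\ge\lambda$ and $<\lambda$-directedness), or else any maximal element serves as its own strict-upper-bound substitute in the quantifier $(\forall t\succ s)$, which is then vacuous and forces us to take some above-everything element in $S$ directly.
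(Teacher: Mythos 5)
Your proof is correct and follows essentially the same route as the paper's: the $(g_s)_{s\in S}$ form a dominating family of size $\mu$, and for a family of fewer than $\lambda$ names one amalgamates the cone tips $s_\alpha$ by ${<}\lambda$-directedness to obtain a single bound $g_t$. The remark about strict versus non-strict $\succ$ is a harmless definitional point that the paper silently glosses over as well.
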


\begin{proof}
$\mathfrak{d}_i\le \mu$, as the set $(g_s)_{s\in S}$ is a dominating family of size $\mu$.
To show $\mathfrak{b}_i\ge\lambda$, 
assume $(f_\alpha)_{\alpha\in\theta}$  is a sequence of $P$-names
of length $\theta<\lambda$.
For each $f_\alpha$ there is a cone of upper bounds with tip $s_\alpha\in S$,
i.e., $(\forall t\succ s_\alpha)\, P\forces f_\alpha \Ri g_t $.
As $S$ is ${<}\lambda$-directed, there is some 
$t$ above all tips $s_\alpha$.
Accordingly, $P\forces f_\alpha \Ri g_t $ for all $\alpha$,
i.e., $\{f_\alpha:\, \alpha\in\theta\}$ is not unbounded.
\end{proof}

So for example, $\mypartI(P,\lambda,\mu)$ implies $\lambda_1\le\mathfrak{b}_1=\addnull$, etc.
The definition and lemma would work for $i=2$ as well, but would not be useful\footnote{More specifically: this definition would give us the property $g_t\notin f$ only for the null sets of the 
specific form $f=\{h:\, \lnot r \RII h\}=N_r$ for some $r\in 2^\omega$;
whereas we will define $\mypartII$ to deal with all names $f$ of
null sets.}
as we do not have $\mathfrak{b}_2\le\covnull$.
So instead, we define $\mypartII$ separately:

\begin{definition}\label{def:partial2}
For $P$, $\lambda$ and $\mu$ as above,  let $\mypartII(P,\lambda,\mu)$ stand for:
\begin{quote}
There are a ${<}\lambda$-directed partial order $(S,\prec)$
of size $\mu$
and a sequence $(g_s)_{s\in S}$ of $P$-names for reals
such that for each $P$-name $f$ of a null set 
$(\exists s\in S)\,(\forall t\succ s)\, P\forces g_t\notin f$.
\end{quote}
\end{definition}

\begin{lemma}\label{lem:partialcharacteristics}
\begin{itemize}
\item[1.] $\mypartI(P,\lambda,\mu)$ implies $P\forces(\, \addnull\ge\lambda \,\&\, \cofnull\le \mu\,)$.
\item[2.] $\mypartII(P,\lambda,\mu)$ implies $P\forces(\, \covnull\ge\lambda \,\&\, \nonnull\le \mu\,)$.
\item[3.] $\mypartIII(P,\lambda,\mu)$ implies $P\forces(\, \mathfrak{b}\ge\lambda \,\&\, \mathfrak{d}\le \mu\,)$.
\item[4.] $\mypartIV(P,\lambda,\mu)$ implies $P\forces(\, \nonmeager\ge\lambda \,\&\, \covmeager\le \mu\,)$.
\end{itemize}
\end{lemma}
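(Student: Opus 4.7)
The plan splits cleanly into a trivial part and one genuine case. Items (1), (3), (4) follow immediately by combining Lemma~\ref{lem:temp} with the identifications in Lemma~\ref{lem:connection}: for $i\in\{1,3,4\}$, $\mypart_i(P,\lambda,\mu)$ yields $P\forces(\mathfrak{b}_i\ge\lambda\,\&\,\mathfrak{d}_i\le\mu)$, and then $\mathfrak{b}_1=\addnull$, $\mathfrak{d}_1=\cofnull$, $\mathfrak{b}_3=\mathfrak{b}$, $\mathfrak{d}_3=\mathfrak{d}$, $\mathfrak{b}_4=\nonmeager$, $\mathfrak{d}_4=\covmeager$ close the cases. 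No new work is needed.

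The substantive case is (2), where $\mypartII$ is deliberately defined to handle \emph{arbitrary} $P$-names of null-sets, not merely names of the form $N_r=\{h:\lnot r\RII h\}$. I would prove the two halves separately, mirroring the proof of Lemma~\ref{lem:temp}.

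For $\covnull\ge\lambda$: let $(N_\alpha)_{\alpha\in\theta}$, with $\theta<\lambda$, be $P$-names of null-sets. By $\mypartII$, each $N_\alpha$ has an associated tip $s_\alpha\in S$ such that $P\forces g_t\notin N_\alpha$ for every $t\succ s_\alpha$. Since $(S,\prec)$ is ${<}\lambda$-directed and $\theta<\lambda$, there is some $t\in S$ with $t\succ s_\alpha$ for all $\alpha\in\theta$; then $P$ forces $g_t\notin\bigcup_{\alpha\in\theta}N_\alpha$, so the family does not cover $2^\omega$.

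For $\nonnull\le\mu$: consider the $P$-name $X\coloneq\{g_s:s\in S\}$, of size ${\le}\mu$. Suppose towards a contradiction that $P$ forces $X$ to be null; then in the extension there is a null Borel set $N\supseteq X$ with a $P$-name $\dot N$. Applying $\mypartII$ to $f=\dot N$ produces some $s_0\in S$ with $P\forces g_t\notin \dot N$ for all $t\succ s_0$; but $g_t\in X\subseteq \dot N$ for any such $t$, a contradiction. Hence $X$ is forced non-null of size ${\le}\mu$, giving $\nonnull\le\mu$.

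The only mild subtlety — and the one point I would double-check carefully — is the step producing the $P$-name $\dot N$ of a null Borel superset of $X$ in the $\nonnull$-argument. This is routine (any null set in a Polish space is contained in a $G_\delta$ null set, and we can take a name for such a hull), but one must phrase it so that $\dot N$ is a legitimate input to the $\mypartII$ property in the ground model. Apart from this bookkeeping the proof is immediate.
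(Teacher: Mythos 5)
Your proof is correct and follows essentially the same route as the paper: cases $i\neq 2$ are read off from Lemmas~\ref{lem:connection} and~\ref{lem:temp}, and case $i=2$ is handled by repeating the argument of Lemma~\ref{lem:temp} with names of null sets (directedness for $\covnull\ge\lambda$, and the non-nullity of $\{g_s: s\in S\}$ for $\nonnull\le\mu$). The bookkeeping point you flag about passing from a name below a condition to a full $P$-name of a null set is indeed routine and does not affect the argument.
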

\begin{proof}
The cases $i\neq 2$ are  direct consequences of Lemmas~\ref{lem:connection} and~\ref{lem:temp}. The proof
for $i=2$ is analogous to the proof of Lemma~\ref{lem:temp}.
\end{proof}

\begin{lemma}\label{lem:partialPa}
$\myparti(\Pa,\lambda_i,\lambda_5)$ holds
(for $i=1,2,3,4$).
\end{lemma}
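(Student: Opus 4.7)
The plan is, for each $i\in\{1,2,3,4\}$, to take as index set $S=S^i$ equipped with the partial order $s\prec t$ iff $w_s\subseteq w_t$, and to let $g_s$ be the real added by the iterand $Q_s$ at stage $s$, interpreted according to $i$: a dominating element of $\mathcal C$ (via Lemma~\ref{lem:connection2}) for $i=1$, the random real for $i=2$, the Hechler real for $i=3$, and the eventually different real for $i=4$. The size of $S^i$ is $\lambda_5$ by Assumption~\ref{asm:P}.

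First I would verify that $(S^i,\prec)$ is ${<}\lambda_i$-directed. Given $\theta<\lambda_i$ many $s_j\in S^i$, the union $u\coloneq\bigcup_{j<\theta}w_{s_j}$ has size ${<}\lambda_i$ because $\lambda_i$ is regular and each $|w_{s_j}|<\lambda_i$, so by the cofinality of $\{w_\alpha:\alpha\in S^i\}$ in $[\delta_5]^{<\lambda_i}$ there is some $s\in S^i$ with $w_s\supseteq u$, i.e.\ $s_j\prec s$ for all $j$.

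Next I would handle the cone-of-bounds condition. Let $f$ be a $\Pa$-name for a real (or, when $i=2$, for a null set, coded as a Borel real). Since $\Pa$ is ccc, every $\Pa$-name for a natural number is decided by a countable antichain whose conditions have finite support, so $f$ has countable support $w^*\subseteq\delta_5$. Because $\aleph_0<\lambda_i$, cofinality of $\{w_\alpha:\alpha\in S^i\}$ yields some $s\in S^i$ with $w_s\supseteq w^*$. For any $t\succ s$ we have $w_t\supseteq w^*$, so the name $f$ is Borel-computable from the generics $(\eta_\beta)_{\beta\in w_t}$, and therefore lies in the sub-extension generated by those generics. Now the iterand $Q_t$ is, by construction in Section~\ref{ss:initalforcing}, the partial amoeba/random/Hechler/eventually different forcing defined from $w_t$, and its generic $g_t$ has precisely the property of bounding every relevant real that is Borel-computable from the $w_t$-generics: for $i=1$ domination in $\mathcal C$ by Lemma~\ref{lem:connection2}; for $i=2$ randomness forces $g_t\notin f$ for every such null set; for $i=3$ Hechler-domination gives $f\RIII g_t$; for $i=4$ the eventually different real gives $f\RIV g_t$. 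Hence $\Pa\forces f\Ri g_t$ (respectively $g_t\notin f$ for $i=2$) for every $t\succ s$.

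The main obstacle, conceptually, is the step asserting that the partial iterand $Q_t$ truly ``sees'' every name whose support lies in $w_t$ and acts generically over it — this rests on the precise definition of the partial forcings from Section~\ref{ss:initalforcing} (and the formalism of \cite[Sec.~1.2]{ourselves}): the Borel reading of the generic filter from a generic real makes the sub-extension $V[(\eta_\beta)_{\beta\in w_t}]$ well-defined, and the partial iterand is, in that sub-extension, a genuine instance of the corresponding standard forcing, so its generic has the required bounding property with respect to reals coded there. Once this absorption is in hand, the four cases are uniform and the verification reduces to the standard generic properties of amoeba/random/Hechler/eventually-different forcings.
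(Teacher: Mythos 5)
Your proposal is correct and follows essentially the same route as the paper: take $S=S^i$ ordered by inclusion of the $w_s$, let $g_s$ be the generic at stage $s$, use the countable support of a ccc FS-iteration name to find the tip $s$ of the cone, and conclude case by case from the generic bounding properties of the partial amoeba/random/Hechler/eventually-different forcings (invoking Lemma~\ref{lem:connection2} for $i=1$). You additionally spell out the ${<}\lambda_i$-directedness of $(S^i,\prec)$, which the paper leaves implicit; no substantive difference otherwise.
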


\begin{proof}
Set $S=S^i$ and 
$s\prec t$ if $w_s\subsetneq w_t$. As $\lambda_i$ is regular, $(S,\prec)$ is ${<}\lambda_i$-directed. 
Let $g_s$ be the generic added at $s$ (e.g., the partial random real in case of $i=2$, etc).
A $\Pa$-name $f$ depends (in a Borel way) on 
the subsequence of generics indexed by a countable set $w^*\subseteq \delta$.
Fix some $s\in S^i$ such that $w_s\supseteq w^*$.
Pick any $t\succ s$. Then $w_t\supseteq w_s$, so $w_t$ contains all information to calculate
$f$, so we can show that $P\forces f \Ri g_t $. Let us list the possible cases:
$i=2$: A partial random real $g_t$ will avoid the
null set $f$.
$i=3$: A partial Hechler real $g_t$ will dominate $f$.
$i=4$: A partial eventually different real $g_t$ will be eventually different from $f$.
As for $i=1$, we use\footnote{Alternatively, we could use, instead of amoeba, some other Suslin ccc forcing that more directly adds an $\RI$-dominating element of $\mathcal C$.} 
Lemma~\ref{lem:connection2}.
\end{proof}

So to summarize what we know so far about $\Pa$:
\begin{itemize}
\item $\myparti$ holds for $i=1,2,3,4$. So the left hand characteristics are large.
\item $\mylini$ holds for $i=1,2,4$. So the left hand characteristics other than $\mathfrak{b}$ are small.
\end{itemize}
However, $\mylinIII$ (corresponding to ``$\mathfrak{b}$ small'') is missing; and we cannot get it by a simple ``preservation
of $(\RIII,\lambda_3)$-goodness'' argument.
Instead, we will argue in the following two sections that it is possible to choose 
the parameter $(w_\alpha)_{\alpha\in S^4}$ in such a way that $\mylinIII$ holds as well.

\subsection{Dealing with \texorpdfstring{$\mathfrak{b}$}{b} without GCH}
\label{ss:no-gch}
In this section, we follow (and slightly modify)
the main construction of~\cite{MR3513558}.

In this section (and this section only) we will assume  the following
(in addition to Assumption~\ref{asm:P}, i.e., in particular to the assumption $\lambda_3=\chi^+$):
\begin{assumption}\label{asm:chi}(This section only.) $2^\chi =|\delta_5|= \lambda_5$.
\end{assumption}

Set $S^0=\lambda_5\cup S^1\cup S^2\cup S^3$.
So $\delta_5=S^0\cup S^4$,
and $\Pa$ is a FS ccc iteration along $\delta_5$ such that $\alpha\in S^0$
implies $|Q_\alpha|<\lambda_3$, i.e., $|Q_\alpha|\le\chi$.
Let us fix $P_\alpha$-names 
\begin{equation}\label{eq:ia}
i_\alpha:Q_\alpha\to\chi\text{ injective}
\end{equation}
(for $\alpha\in S^0$). Note that we can 
strengthen each $p\in \Pa$ to some $q$ such that
$\alpha\in \supp(q)\cap S^0$ 
implies $q\restriction\alpha\forces i_\alpha(q(\alpha))=\check \jmath$   for some 
 $j\in\chi$.

For $\alpha\in S^4$, $Q_\alpha$ is a partial eventually different
forcing. At this point, we should specify which 
variant of this forcing we actually use:\footnote{In the previous section it did not matter which variant we use.}
\begin{definition}\label{def.eor}
\begin{itemize}
\item
Eventually different forcing $\Eor$ consists of all
tuples $(s,k,\varphi)$, where $s\in \omega^{<\omega}$, $k\in \omega$, 
and $\varphi:\omega\to [\omega]^{\le k}$ satisfies $s(i)\notin \varphi(i)$
for all $i\in \dom(s)$. 
\item
We define $(s',k',\varphi')\le (s,k,\varphi)$ if $s\subseteq s'$, 
$k\le k'$, 
and $\varphi(i) \subseteq \varphi'(i)$ for all $i$.  
\item
The generic object $g^*=\bigcup_{(s,k,\varphi)\in G_\Eor}{s}$ is a function such that each condition $(s,k,\varphi)$
forces that $s$ is an initial segment of $g^*$, and $g^*(i)\notin \varphi(i)$ 
for all $i$.   
\item
We call $s\in \omega^{{<}\omega}$ the ``stem'' of $(s,k,\varphi)$ and $k\in\omega$ the ``width''.
\end{itemize}
\end{definition}

A density argument shows that $g^*$ will be eventually different from 
all functions $f:\omega\to \omega$ from $V$. 

The following is easy to see:
\begin{itemize}
    \item \label{compat.a} If $p,q\in \Eor$ are compatible, then they have a greatest lower bound. 
    \item \label{compat.b} Any finite set of conditions with the same stem has a lower bound (again with the same stem). 
     So $ \Eor$ is $\sigma$-centered.
     \item If $q=(s',k',\varphi')$ and 
     $p=(s,k,\varphi)$ and $s'$ extends $s$,
     then $p$ and $q$ are compatible iff
     $s'(i)\notin \varphi(i)$ for all $i\in\dom(s')$.
     \item  \label{compat.c}  If a condition $q^* =(s^*,k^*,\varphi^ *)$ is compatible with each condition in a finite set $B\subseteq \Eor$, and $s^*$ extends $s$ for each $(s,k,\varphi)\in B$, then 
     the set $B\cup \{q^*\}$ has a lower bound. 
     (Use  $s^*$ as stem, and take the pointwise union of all $\varphi$ that occur in $B\cup \{q^ *\}$.)
\end{itemize}
We will not force with $\Eor$, but 
with a partial version of $\Eor$.
In the $P_\alpha$-extension (for $\alpha\in S^4$), 
this partial forcing $Q_\alpha=\Eor'$ is a 
(generally not complete) sub-forcing of $\Eor$ which is easily
seen to be closed under conjunctions
(i.e., under the partial operation ``greatest lower bound'' of finite sets of conditions). 
Note that this implies that compatibility is absolute between $\Eor$ and $\Eor'$,
and that the previous items also hold for $\Eor'$. For later reference, let
us explicitly state the last item:
\begin{fact}\label{fact:bla}
Assume $\Eor'\subseteq \Eor$ is closed under 
conjunctions. 
If a condition $q^* =(s^*,k^*,\varphi^ *)$ in $\Eor'$ is compatible with each condition in a finite set $B\subseteq \Eor'$, and $s^*$ extends $s$ for each $(s,k,\varphi)\in B$, then 
     the set $B\cup \{q^*\}$ has a lower bound in $\Eor'$. 
\end{fact}

\begin{definition}
   Let $D$ be a non-principal ultrafilter on $\omega$, and let $\bar p = 
(p_n)_{n\in \omega} = (s,k,\varphi_n)_{n\in \omega}$ be a
sequence of conditions in $\Eor$ with the same stem and the same  width. 
We define $\lim_D\bar p$ to be  $(s,k,\varphi_\infty)$, 
where for all $i$ and all $j$ we have 
$
 j\in \varphi_\infty(i) \Leftrightarrow \{ n: j\in \varphi_n(i)\}\in D
$. 
\end{definition}
The following is easy to see: 
$\lim_D\bar p\in\Eor$ and
if $q\le \lim_D \bar p$, then the set $B\coloneq\{ n\in \omega:\, p_n \text{ compatible with } q\}$ is in $D$.

(Proof: $q=(s',k',\varphi')\le \lim_D\bar p=(s,k,\varphi_\infty)$. So for each $i\in\dom(s')$, $s'(i)\notin \varphi_\infty(i)$, and by the definition of the limit, $A^i \coloneq\{n:\, s'(i)\notin \varphi_n(i)\}\in D$. If $n\in\bigcap_{i\in\dom(s')} A^i$, then $p_n$ is compatible with $q$.)

As $B$ is defined using only compatibility,
the statement still holds for
compatibility preserving subforcings.
We state it for later reference in the 
following form:
\begin{fact}\label{fact:blubb4}
Assume that\/  $\Eor'$  is a subforcing
of\/ $\Eor$ closed under conjunctions,
let $\bar p$ be a sequence of\/ $\Eor'$
conditions with the same stem and width, and 
assume that $\lim_D(\bar p)\in \Eor'$
and that $q\le_{\Eor'} \lim_D(\bar p)$.
Then 
$B\coloneq\{ n\in \omega:\, p_n \text{ compatible with } q\}$ is in $D$.
\end{fact}




\begin{definition}
\begin{itemize}
   \item A ``partial guardrail'' is a 
function $h$ defined on a subset of $\delta_5$  
such that $h(\alpha)\in \chi$ for $\alpha\in S^0\cap\dom(h)$, and $h(\alpha)
\in \omega^{<\omega}\times \omega$ for $\alpha\in S^4\cap\dom(h)$.
  \item A ``countable guardrail'' is a partial guardrail with
  countable domain. A ``full guardrail'' is a partial guardrail with domain $\delta_5$.
\end{itemize}
\end{definition}  

We will use the following lemma, which is a consequence of the Engelking-Kar{\l}owicz theorem~\cite{MR0196693} on the density of box products (cf.~\cite[5.1]{MR3513558}):  
\begin{lemma}\label{use.EK}
(As $|\delta_5|\le 2^\chi$ and $\chi^{\aleph_0}=\chi$.) There is a family $H^*$ of full guardrails
with $|H^*|=\chi$,
such that each countable guardrail is extended by
some $h\in H^*$. 
We will fix such an $H^*$ and  enumerate it as $(h^*_\ve)_{\ve\in \chi}$.
\end{lemma}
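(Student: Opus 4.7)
The plan is to reduce the lemma to the classical Engelking--Karlowicz theorem in the following form: if $\kappa$ is an infinite cardinal with $\kappa^{\aleph_0}=\kappa$ and $\lambda\le 2^\kappa$, then there exists a family $\mathcal F\subseteq \kappa^\lambda$ of size $\kappa$ such that every countable partial function $p:\lambda \to \kappa$ is extended by some $f\in \mathcal F$. First I would check the arithmetic hypothesis in our setting: since Assumption~\ref{asm:P} gives $\mu<\lambda_3 \Rightarrow \mu^{\aleph_0}<\lambda_3$ and $\lambda_3 = \chi^+$, we obtain $\chi^{\aleph_0} < \chi^+$, hence $\chi^{\aleph_0} = \chi$; and $|\delta_5| = \lambda_5 \le 2^\chi$ holds by Assumption~\ref{asm:chi}.

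The only wrinkle is that a guardrail has mismatched codomains: its values lie in $\chi$ on $S^0$ but in the countable set $C \coloneq \omega^{<\omega}\times \omega$ on $S^4$. I would deal with this by fixing once and for all an injection $\iota: C \hookrightarrow \chi$ (possible since $C$ is countable) together with an arbitrary default element $c_0 \in C$. A partial guardrail $h$ is then encoded as a partial function $\hat h: \dom(h) \to \chi$ via $\hat h(\alpha) = h(\alpha)$ on $S^0$ and $\hat h(\alpha) = \iota(h(\alpha))$ on $S^4$; conversely, each function $f: \delta_5 \to \chi$ decodes to a full guardrail $h_f$ by putting $h_f(\alpha) = f(\alpha)$ on $S^0$, and on $S^4$ setting $h_f(\alpha) = \iota^{-1}(f(\alpha))$ when $f(\alpha) \in \mathrm{range}(\iota)$ and $h_f(\alpha) = c_0$ otherwise.

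With these preparations, I would apply Engelking--Karlowicz (with $\kappa = \chi$ and $\lambda = |\delta_5|$) to obtain a family $\mathcal F$ as above, and set $H^* \coloneq \{h_f : f \in \mathcal F\}$, which has cardinality $\chi$. To verify that $H^*$ works, let $h$ be any countable guardrail; then $\hat h$ is a countable partial function into $\chi$, so some $f \in \mathcal F$ extends $\hat h$, and since the values of $\hat h$ on $S^4 \cap \dom(h)$ all lie in $\mathrm{range}(\iota)$, the decoding is faithful there, giving $h_f(\alpha) = h(\alpha)$ on all of $\dom(h)$. I do not expect any real obstacle — once the arithmetic is checked the argument is pure bookkeeping, and all the substantive work is packed into the cited Engelking--Karlowicz theorem.
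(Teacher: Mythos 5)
Your proposal is correct and follows exactly the route the paper intends: the paper simply cites the Engelking--Karlowicz theorem (in the same combinatorial form you state) and leaves the cardinal arithmetic ($\chi^{\aleph_0}=\chi$ from Assumption~\ref{asm:P} and $\lambda_3=\chi^+$, plus $|\delta_5|\le 2^\chi$ from Assumption~\ref{asm:chi}) and the coding of the two codomains as routine bookkeeping, which you have carried out correctly.
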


Note that the notion of guardrail (and the density property required in
Lemma~\ref{use.EK}) only
depends on $\chi$, $\delta_5$, $S^0$ and $S^4$, i.e., on fixed parameters;
so we can fix an $H^*$ that will work for all cofinal parameters $\bar w=(w_\alpha)_{\alpha\in S^4}$.

Once we have decided on $\bar w$, and thus have defined $\Pa$, we can define the following:
\begin{definition}
 A condition $p\in \Pa$ follows the full guardrail $h$, 
  if 
  \begin{itemize}
   \item 
   for all $\alpha\in S^0 \cap \dom(p)$, the empty condition of $P_\alpha$ forces 
   that $p(\alpha)\in Q_\alpha$ and $i_\alpha(p(\alpha)) = h(\alpha)$ (where $i_\alpha$ is defined in~\eqref{eq:ia}), and
  \item  for all $\alpha\in S^4\cap \dom(p)$:
  \begin{itemize}
  	\item  $p\on\alpha $ forces
      that the pair of stem and width of $p(\alpha) $ is equal to $h(\alpha)$, and moreover
    \item $p$ is determined at $\alpha$. (This was defined in~\ref{def:groundmodelsequence}(\ref{item:det}): We already know in $V$ a code $(B,u)$ that evaluates to $p(\alpha)$.)
     \end{itemize}
  \end{itemize}       
\end{definition}
As we are dealing with a FS iteration,
the set of conditions $p$ determined at each 
position $\alpha\in\dom(p)$ is easily seen to be dense (by induction).
So note that     
      \begin{itemize}
      \item      the set of conditions $p$ such that there is \emph{some} guardrail $h$
      such that $p$ follows $h$, is dense; while
      \item 
      
      for each \emph{fixed} guardrail $h$, the set      
      of all conditions $p$ following $h$, is \emph{centered} (i.e.,
      each finitely many such $p$ are compatible).
      \end{itemize}
 
\begin{definition}
\begin{itemize} 
  \item A ``$\Delta$-system with root $\nabla$ following the full guardrail $h$'' is a family $\bar p=(p_i)_{i\in I}$ 
     of conditions all following $h$, where 
       $(\dom(p_i):i\in I)$ is a $\Delta$-system with root $\nabla$ in the usual sense (so $\nabla\subseteq \delta_5$ is finite).
   \item 
We will be particularly interested in countable $\Delta$-systems.  
   Let $(p_n:n\in \omega)$ be such a $\Delta$-system with root
   $\nabla$ following $h$,  and assume that $\bar
   D=(D_\alpha:\alpha\in u)$ is  
  a sequence such that $u\supseteq \nabla \cap S^4$
  and each $D_\alpha$ 
   is a $P_\alpha$-name
   of an ultrafilter on $\omega$. Then we define the $\lim_{\bar D}\bar p$
   to be the following function with domain $\nabla$: 
   \begin{itemize}
   \item If $\beta\in \nabla\cap S^0 $, then $\lim_{\bar D}\bar p(\beta)$ is the common 
   value of all $p_n(\beta)$.  (Recall that this value is already determined by 
    the guardrail $h$.)
   \item If $\alpha\in \nabla\cap S^4$, then $\lim_{\bar D}\bar p(\alpha)$ is
    (forced by $\Pa_\alpha$ to be) $\lim_{D_\alpha}(p_n(\alpha))_{n\in \omega}$.  
    
   \end{itemize}
\end{itemize}
\end{definition}

Note that in general $\lim_{\bar D}\bar p$ will not be a condition in
$\Pa$: 
For $\alpha\in S^4\cap \nabla$,
the object $\lim_{\bar D}\bar p(\alpha)$ will be forced to be in the eventually different forcing $\Eor$, but not 
necessarily in the \emph{partial} eventually different forcing $Q_{\alpha}\subseteq\Eor$. 

Also note the following:
If $\bar p$ is a countable $\Delta$-system, and 
$\alpha\in\nabla\cap S^4$, then
$(p_n(\alpha))_{n\in\omega}$ is a ground-model-code-sequence
(see Definition~\ref{def:groundmodelsequence}(\ref{item:gms})). This follows trivially from the 
definition of ``$p_n$ follows $h$'' and the fact that 
$\bar p$ is in $V$. 

Recall that we assume all of the parameters defining $\Pa=(P_\alpha,Q_\alpha)_{\alpha\in \delta_5}$ to be fixed,
apart from $(w_\alpha)_{\alpha\in S^4}$.
Once we fix $w_\alpha$ for $\alpha\in S^4\cap \beta$, we know $P_\beta$.
\begin{construction}\label{constr}
We can construct by induction on $\alpha\in \delta_5$ 
the sequences $(D^\ve_\alpha)_{\ve\in \chi}$ and, if $\alpha\in S^4$,
also $w_\alpha$, such that:
\begin{enumerate}[(a)]
\item\label{item:olda} Each $D^\ve_\alpha$ is a $P_\alpha$-name of a nonprincipal ultrafilter extending 
 $\bigcup_{\beta<\alpha}D^\ve_\beta$.
\item\label{item:oldb1} For each countable $\Delta$-system $\bar p$ in $P_\alpha$ which follows
the guardrail $h^*_\ve\in H^*$:
\\
$\lim_{(D^\ve_\beta)_{\beta<\alpha}}\bar p$ is in $P_\alpha$\,\dots
\item\label{item:oldb2} \dots\ and forces that 
$A_{\bar p}\coloneq \{n\in\omega:\, p_n\in G_\alpha\}$ is in $D^\ve_\alpha$.
\item\label{item:oldc} (If $\alpha\in S^4$) 
$w_\alpha\subseteq \alpha$, $|w_\alpha|<\lambda_4$, and
%
%
%
for all ground-model-code-sequences\footnote{see Definition~\ref{def:groundmodelsequence}(\ref{item:gms}).}
for elements of $Q_\alpha$, the 
$D^\ve_\alpha$-limit is forced to be in $Q_\alpha$
as well (for all $\ve\in \chi$).

(Actually, the set of $w_\alpha$ satisfying this
is an $\omega_1$-club set in $[\alpha]^{{<}\lambda_4}$.\footnote{I.e., for each $w^*\in [\alpha]^{{<}\lambda_4}$ there is a $w_\alpha\supseteq w^*$ satisfying (d), and if $(w^i)_{i\in\omega_1}$ is an increasing sequence of sets satisfying (d), then the limit $w_\alpha\coloneq \bigcup_{i\in\omega_1}w^i$ satisfies (d) as well.})
\end{enumerate}
\end{construction}
%
%
\begin{proof}

\emph{(\ref{item:oldb1}) for $\alpha$ limit:}
The root of a $\Delta$-system is finite and therefore below some $\beta<\alpha$, so the limit exists (by induction) already in $P_\beta$. 

\emph{(\ref{item:olda}+\ref{item:oldb2}) for $\alpha$ limit:}
It is enough to show, for each $\ve\in\chi$, that $P_\alpha$
forces that the following generates a proper filter (i.e., any finite intersection
of elements of this set is nonempty):
\[
\bigcup_{\beta <\alpha} D^\ve_\beta\ \cup\ \{ A_{\bar p}:\,
\bar p\text{ is a countable $\Delta$-system following $h^*_\ve$ and }\lim\nolimits_{(D^\ve_\beta)_{\beta<\alpha}}\bar p\in G_\alpha\}.
\]
(Then we let $D^\ve_\alpha$ be any ultrafilter extending this set.)

So assume towards a contradiction that $q\in P_\alpha$ forces that 
$A\cap A_{\bar p^0}\cap \dots\cap A_{\bar p^{n-1}}=\emptyset$, where
$A\in D^\ve_{\beta_0}$
for some $\beta_0<\alpha$ (we can assume $\beta_0$ is already decided in $V$) and $\bar p^i$ as above
with $q\le \lim\nolimits_{(D^\ve_\beta)_{\beta<\alpha}}\bar p^i $ for $i<n$.
Let $\beta_1<\alpha$ be the maximum of the union of the roots
of the $\bar p^i$, and set 
$\beta_2\coloneq max(\supp(q))$ and
$\gamma\coloneq max(\beta_0,\beta_1,\beta_2)+1$.
By the induction hypothesis, $q$ forces  
$A'\coloneq A\cap \bigcap_{i<n} A_{\bar p^i\restriction \gamma}\in D^\ve_\gamma$ (as $\lim\nolimits_{(D^\ve_\beta)_{\beta<\gamma}}\bar p^i\restriction\gamma  = \lim\nolimits_{(D^\ve_\beta)_{\beta<\alpha}}\bar p^i$, since the root lies below $\gamma$).
As $A'$ is a  $P_\gamma$-name, we can find 
$q'\le q$ in $P_\gamma$ and $\ell\in\omega$ such that 
$q'\forces \ell\in A'$.
We now find $q''\le q'$ in $P_\alpha$ by defining $q''(\beta) $ for
each element $\beta$  of the finite set 
$\bigcup_{i<n}\supp (p^i_\ell)\setminus \gamma$: For such $\beta$ in $S^0$, the guardrail gives a specific 
value $h^*_\ve(\beta)\in Q_\beta$, which we use for $q''(\beta)$ as well.
For $\beta\in S^4$, 
all conditions $p^i_{\ell}(\beta)$ (where defined) 
have the same stem and width $h^*_\ve(\beta)$; hence there is a common
extension $q''(\beta)$.

Clearly $q''$ forces that $\ell$ is in the allegedly empty set, the desired contradiction.

\emph{(\ref{item:oldb1}) for $\alpha=\gamma+1$ successor:}
Assume the nontrivial case, $\gamma\in S^4$:
Write the $\Delta$-system as $(p_i,q_i)_{i\in\omega}$
with $(p_i,q_i)\in P_\gamma*Q_\gamma$.
As noted above, $(q_n)_{n\in\omega}$ is a ground-model-code-sequence, and by induction 
(\ref{item:oldc}) holds for $w_\gamma$.
So it is forced that
the $D^\ve_\gamma$-limit $q^*$ of the $q_n$ is in $Q_\gamma$.
Again by induction, the limit $p^*$ of the $p_n$ exists as well;
and $(p^*,q^*)$ is the required limit.

\emph{(\ref{item:olda}+\ref{item:oldb2}) for $\alpha=\gamma+1$ successor:}
We again have to show that $P_\alpha$ forces that the following is a filter base, for each $\ve\in\chi$:
\[
D^\ve_\gamma\cup\{A_{\bar p}:\, \bar p\text{ is a countable $\Delta$-system following $h^*_\ve$ and }\lim\nolimits_{(D^\ve_\beta)_{\beta<\alpha}} \bar p\in G_\alpha\}.
\]
As above, assume that $q$ forces $A\cap A_{\bar p^0}\cap\dots\cap A_{\bar p^{n-1}}=\emptyset$.

We can assume that $q\restriction \gamma$ forces that $q(\gamma)$ is stronger 
than the limit of all $\bar p^i(\gamma)$ (for $i<n$).
Thus,  by Fact~\ref{fact:blubb4}, each  $B_i \coloneq \{\ell\in\omega:\, q(\gamma) \text{ compatible with  } p^ i_\ell(\gamma)\}$ is forced to be in $D_\gamma^ \varepsilon$. 


By induction, $q\restriction \gamma$ forces  that
$A'\coloneq A\cap \bigcap_{i<n} A_{\bar p^i\restriction \gamma}\in D^\ve_\gamma$,
and therefore also forces that $B'=A'\cap \bigcap_{i<n} B_i$ is in the ultrafilter and in particular nonempty. 
Work in the 
$P_\gamma$-extension by some generic filter
containing
$q\restriction \gamma$. Fix some $\ell\in B'$. 
%
%
By the definition of $B_i$, $q(\gamma)$ is compatible with each $p^i_\ell(\gamma)$ for $i<n$.
According to Fact~\ref{fact:bla}
there is a common lower bound $q''$.


$q\restriction \gamma \Vdash_{P_\gamma} 
q'' \Vdash_{Q_\gamma}\ell\in A_{\bar p^i}$.
I.e.,
$q\restriction \gamma*q''\le q$ forces that 
$\ell$ is an element of the allegedly empty set. 


\emph{(\ref{item:oldc})} For any $w\subseteq\alpha$, let $Q^w$ be the 
($P_\alpha$-name for) the partial eventually different forcing
defined using $w$. 
Start with some $w^0\subseteq \alpha$ of size ${<}\lambda_4$. 
There are $|w^0|^{\aleph_0}$ many ground-model sequences 
in $Q^{w^0}$.
For any $\ve$ and any such sequence, the $D^\ve_\alpha$-limit 
is a real; so we can extend $w^0$ by a countable set to some $w'$ such that
$Q^{w'}$ contains the limit. We can do that for all $\ve\in\chi$
and all sequences, resulting in some $w^1\supseteq w^0$
still of size ${<}\lambda_4$. We iterate this construction and 
get $w^i$ for $i\le\omega_1$, taking the unions at limits.
Then $w_\alpha\coloneq w^{\omega_1}$ is as required, as $Q_\alpha\coloneq Q^{w_\alpha}=\bigcup_{i<\omega_1}Q^{w_i}$.

So this proof actually shows that the set of  $w_\alpha$ with the desired property is an $\omega_1$-club.
\end{proof}

After carrying out the construction of this lemma, we get a forcing
notion $\Pa$ satisfying the following: 

\begin{lemma}
$\mylinIII(\Pa,\kappa)$ for $\kappa\in[\lambda_3,\lambda_5]$,
witnessed by the sequence $(c_\alpha)_{\alpha<\kappa}$ of the first $\kappa$ many Cohen reals.
\end{lemma}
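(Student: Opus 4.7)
The plan is to argue by contradiction, exploiting the freedom at Cohen coordinates together with the guardrail/ultrafilter machinery from Construction~\ref{constr}. Suppose $(c_\alpha)_{\alpha<\kappa}$ fails to witness $\mylinIII(\Pa,\kappa)$: then there is a $\Pa$-name $y$ for an element of $\omega^\omega$ and a cofinal $B\subseteq\kappa$ such that for each $\beta\in B$ some $p_\beta\in\Pa$ and $n_\beta\in\omega$ satisfy $p_\beta\forces(\forall n\ge n_\beta)\,c_\beta(n)\le y(n)$. I first arrange uniformizations: strengthen $p_\beta$ so that $\beta\in\supp(p_\beta)$, and by countable pigeonhole pass to a cofinal subset on which $n_\beta=n_0$ and $|p_\beta(\beta)|=m$ are fixed; set $m_0\coloneq\max(m,n_0)$.

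The key trick is to use that $p_\beta$ is a finite Cohen condition at $\beta$: for each $v\in\omega$, extend $p_\beta$ to $p^v_\beta\le p_\beta$ by extending the Cohen stem at $\beta$ to length $m_0+1$ (padding with zeros if $m<m_0$) and placing $v$ at position $m_0$. Since $m_0\ge n_0$ and $p^v_\beta\forces c_\beta(m_0)=v$, this forces $p^v_\beta\forces y(m_0)\ge v$. I pick an infinite sequence $\beta_0<\beta_1<\cdots$ in $B$, set $v_j\coloneq j$, and define $p'_j\coloneq p^{v_j}_{\beta_j}$. By the countable $\Delta$-system lemma plus further pigeonhole, I pass to a subsequence on which $(p'_j)_{j\in\omega}$ forms a $\Delta$-system with root $\nabla$ and all $p'_j\restriction\nabla$ agree.

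Next I feed $\bar p'=(p'_j)_{j\in\omega}$ to the guardrail/ultrafilter machinery. Reading off the conditions yields a countable partial guardrail $h_0$ on $\bigcup_j\supp(p'_j)$: at $\gamma\in S^0$ the value is $i_\gamma(p'_j(\gamma))\in\chi$ (constant in $j$ for $\gamma\in\nabla$), and at $\gamma\in S^4$ it is the stem of $p'_j(\gamma)$. At the Cohen coordinate $\beta_j\notin\nabla$ the value $h_0(\beta_j)=i_{\beta_j}(p'_j(\beta_j))$ encodes the Cohen stem whose last entry is $v_j$. By Lemma~\ref{use.EK} there is $\ve^*\in\chi$ with $h^*_{\ve^*}\supseteq h_0$, so every $p'_j$ follows $h^*_{\ve^*}$, and $\bar p'$ is a countable $\Delta$-system in $P_\alpha$ (for any $\alpha<\delta_5$ above $\bigcup_j\supp(p'_j)$) following $h^*_{\ve^*}$. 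Construction~\ref{constr}(b) gives that $r\coloneq\lim_{(D^{\ve^*}_\gamma)_{\gamma<\alpha}}\bar p'$ lies in $P_\alpha\subseteq\Pa$ and forces $A_{\bar p'}=\{j:p'_j\in G_\alpha\}\in D^{\ve^*}_\alpha$, hence $A_{\bar p'}$ is infinite in the generic extension.

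The contradiction then drops out: in any generic $G\ni r$, for each $j\in A_{\bar p'}$ we have $p'_j\in G$ and thus $y^G(m_0)\ge v_j=j$, which must hold for infinitely many $j$, contradicting $y^G(m_0)\in\omega$. The most delicate point is to arrange the growth $v_j\to\infty$ simultaneously with the conditions all following a single guardrail $h^*_{\ve^*}$; this is exactly the role of Lemma~\ref{use.EK} (density of $H^*$ with respect to countable partial guardrails), while Construction~\ref{constr}(c) — closure of each $Q_\alpha$ under $D^\ve_\alpha$-limits — is what makes the ultrafilter limit $r$ a genuine $\Pa$-condition rather than merely a formal object.
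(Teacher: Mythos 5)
Your overall strategy is exactly the paper's: negate the statement to obtain cofinally many $\beta\in\kappa$ with conditions forcing $c_\beta\le^* y$, use the freedom at the Cohen coordinate $\beta_j$ to append unboundedly large values at a fixed position $m_0$, realize the resulting countable $\Delta$-system as one following a full guardrail $h^*_{\ve^*}\in H^*$ via Lemma~\ref{use.EK}, and let the $D^{\ve^*}$-limit from Construction~\ref{constr} force infinitely many of the $p'_j$ into the generic filter, so that $y(m_0)$ is forced above infinitely many values. The core mechanism and the final contradiction are right, and your observation that the distinct coordinates $\beta_j$ fall outside the root (so the guardrail may record a different Cohen stem, ending in $v_j$, at each of them) is a correct and slightly cleaner accounting than fixing a single stem $s$.

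There is, however, one step that fails as written: ``by the countable $\Delta$-system lemma plus further pigeonhole, I pass to a subsequence on which \dots all $p'_j\restriction\nabla$ agree.'' You perform this thinning only after having already dropped to a countable family $(p'_j)_{j\in\omega}$. At a root coordinate $\gamma\in\nabla\cap S^0$, following a common guardrail requires the label $i_\gamma(p'_j(\gamma))\in\chi$ to be constant in $j$, and $\chi$ may be uncountable; among countably many conditions these labels can be pairwise distinct, so no infinite subfamily need agree on $\nabla$. (Countable pigeonhole only covers value sets that are countable, which handles the stems at $\nabla\cap S^4$ but not the $S^0$-labels.) The repair is to carry out the $\Delta$-system extraction and the root-uniformization on the cofinal family of size $\kappa$ \emph{before} selecting $\omega$ many conditions: since $\kappa\ge\lambda_3=\chi^+$ is regular and there are only $\chi$ many possible tuples of labels on a finite root, one gets a subfamily of size $\kappa$ agreeing on $\nabla$ (and, after discarding at most $|\nabla|$ many indices, $\beta_j\notin\nabla$). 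This is precisely where the hypotheses $\kappa\ge\lambda_3$ and ``$\lambda_3$ is the successor of the regular $\chi$ bounding $|Q_\gamma|$ for $\gamma\in S^0$'' enter --- a use of the assumptions that your write-up otherwise never makes. With the thinning reordered in this way, your argument coincides with the paper's proof.
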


\begin{proof}
We want to show that for every $\Pa$-name $y$
there are coboundedly many $\alpha\in\kappa$ such that 
$\Pa\forces \lnot c_\alpha \le^* y$.

Assume that $p^*$ forces that 
there are unboundedly many $\alpha\in\kappa$
with $c_\alpha\le^* y$, and enumerate them as $(\alpha_i)_{i\in\kappa}$ 
in increasing order (so in particular $\alpha_i\ge i$). 
Pick $p_i\le p^*$ deciding $\alpha_i$ to be some $\beta_i$, and also deciding 
$n_i$ such that $(\forall m\ge n_i)\, c_{\alpha_i}(m)\le y(m)$.
We can assume that $\beta_i\in\dom(p_i)$.
Note that $\beta_i$ is a Cohen position (as $\beta_i<\kappa\le \lambda_5$), and we can assume that $p_i(\beta_i)$ is a Cohen condition in $V$ (and not just a $P_{\beta_i}$-name for such a 
condition).
By thinning out, we may assume:
\begin{itemize}
\item
All $n_i$ are equal to 
some $n^*$.
\item $(p_i)_{i\in\kappa}$ forms a 
$\Delta$-system with root $\nabla$.
\item $\beta_i \notin \nabla$, hence all $\beta_i$ are distinct.

(For any $\beta\in\kappa$, at most 
$|\beta|$ many $p_i$ can force $\alpha_i=\beta$, as 
$p_i$ forces that $\alpha_i\ge i$ for all $i$.)
\item $p_i(\beta_i)$ is always the same Cohen 
condition $s$, 
without loss of generality of length $n^{**}\ge n^*$.

(Otherwise extend $s$.)
\end{itemize}
Pick the first $\omega$ many elements $(p_i)_{i\in\omega}$ 
of this $\Delta$-system.
Now extend each $p_i$ to $p_i'$ by extending
the Cohen condition $p_i(\beta_i)=s$ to
$s^\frown i$ (i.e., forcing $c_{\alpha_i}(n^{**})=i$).
Note that 
$(p'_i)_{i\in\omega}$ is still a countable $\Delta$-system, following some new countable guardrail
and therefore some full guardrail $h^*_\ve\in H^*$.

Accordingly, the limit 
$\lim_{(D^\ve_\alpha)_{\alpha\in \delta_5}}\bar p'$
forces that infinitely many of the
$p'_i$ are in the generic filter.
But each such $p'_i$ forces that $c_{\alpha_i}(n^{**})=i \le y(n^{**})$,  a contradiction.
\end{proof}

\subsection{Recovering GCH}
\label{ss:recovering}
For the rest of the paper we will assume the following for
the ground model $V$
(in addition to Assumption~\ref{asm:P}):
\begin{assumption}
GCH holds.
\end{assumption}
(Note that this is incompatible with Assumption~\ref{asm:chi}.)

Recall that all parameters used to define $\Pa$ are fixed, 
apart from $\bar w=(w_\alpha)_{\alpha\in S^4}$.

\begin{lemma}\label{lem:gch-construction}
We can choose $\bar w$ such that
$\mylinIII(\Pa,\kappa)$ holds for all regular $\kappa\in [\lambda_3,\lambda_5]$.
\end{lemma}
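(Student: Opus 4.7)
For $i\neq 3$, the assertion $\mylini(\Pa,\kappa)$ for $\kappa\in[\lambda_i,\lambda_5]$ is Lemma~\ref{lem:linearPapartial}, which holds for \emph{any} cofinal parameter $\bar w$. So the real content of the lemma is to arrange $\mylinIII(\Pa,\kappa)$ for regular $\kappa\in[\lambda_3,\lambda_5]$ under GCH. The obstacle is that the proof in the previous section crucially used Assumption~\ref{asm:chi} via the Engelking-Karlowicz-style Lemma~\ref{use.EK}, which gives a size-$\chi$ family $H^*$ of full guardrails only when $|\delta_5|\le 2^\chi$; under GCH we have $2^\chi=\lambda_3\ll\lambda_5=|\delta_5|$, so no such small guardrail family exists on all of $\delta_5$.

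My plan is to localize the construction along a continuous $\subseteq$-increasing chain $(M_\xi)_{\xi<\lambda_5}$ of elementary submodels of $H(\Theta)$ for some sufficiently large $\Theta$, with $|M_\xi|=\lambda_3$, $M_\xi^\omega\subseteq M_\xi$, each $M_\xi$ containing all fixed parameters, and $\delta_5\subseteq\bigcup_{\xi<\lambda_5}M_\xi$. Since $|M_\xi\cap\delta_5|\le\lambda_3=2^\chi$ under GCH, Engelking-Karlowicz applied to $M_\xi\cap\delta_5$ yields a family $H^*_\xi=\{h^*_{\xi,\ve}:\ve<\chi\}$ of $\chi$ many full guardrails on $\delta_5$ extending every countable guardrail whose domain lies in $M_\xi\cap\delta_5$. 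Because $M_\xi^\omega\subseteq M_\xi$, every countable $\Delta$-system $\bar p$ lies in some $M_\xi$, hence the guardrail it follows is extended by some $h^*_{\xi,\ve}\in H^*_\xi$.

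I then re-run the induction of Construction~\ref{constr}, simultaneously choosing $w_\alpha$ (for $\alpha\in S^4$) and, for each $\xi$ with $\alpha\in M_\xi$ and each $\ve<\chi$, $P_\alpha$-names $D^{\xi,\ve}_\alpha$ for ultrafilters satisfying the analogues of conditions (a) and (b) of Construction~\ref{constr} relative to $h^*_{\xi,\ve}$. The technical core is to keep $|w_\alpha|<\lambda_4$ while still enforcing condition (c) for every relevant $(\xi,\ve)$: this would fail if we naively demanded closure under all $\lambda_5\cdot\chi=\lambda_5$ many ultrafilter-limits. The key point is to arrange that at stage $\alpha$ only the $\chi$ many ultrafilters $D^{\xi_0(\alpha),\ve}_\alpha$ (where $\xi_0(\alpha):=\min\{\xi:\alpha\in M_\xi\}$) generate \emph{new} closure demands, while for $\xi>\xi_0(\alpha)$ the ultrafilter $D^{\xi,\ve}_\alpha$ can be chosen as a coherent extension piggybacked on a lower-$\xi$ ultrafilter via the elementarity of the $M_\xi$. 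Thus only $\chi<\lambda_4$ closure tasks per stage feed into the $\omega_1$-club argument of Construction~\ref{constr}(c), and it still delivers $|w_\alpha|<\lambda_4$.

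Given such a $\bar w$, the proof of $\mylinIII(\Pa,\kappa)$ copies the previous section almost verbatim: starting from a hypothetical counterexample $p^*$, extract by the $\Delta$-lemma and the Cohen-extension trick a countable $\Delta$-system $(p'_i)_{i\in\omega}$; since $M_\xi^\omega\subseteq M_\xi$, it lies in some $M_\xi$ and follows some guardrail $h^*_{\xi,\ve}\in H^*_\xi$; then the limit $\lim_{(D^{\xi,\ve}_\beta)_\beta}\bar p'$ forces infinitely many $p'_i$ into the generic filter, contradicting the fact that each such $p'_i$ forces $c_{\alpha_i}(n^{**})=i\le y(n^{**})$. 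The hard part of the whole argument is the coherent construction of the $D^{\xi,\ve}_\alpha$ across the chain $(M_\xi)_\xi$ sketched above; this is the price one pays for replacing Assumption~\ref{asm:chi} by GCH.
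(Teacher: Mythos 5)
Your reduction to the case $i=3$, your diagnosis of the obstacle (Lemma~\ref{use.EK} needs $|\delta_5|\le 2^\chi$, which fails under GCH since $2^\chi=\lambda_3<\lambda_5=|\delta_5|$), and your closing step (every countable $\Delta$-system lies in some $\omega$-closed $M_\xi$ and hence follows a guardrail in $H^*_\xi$) are all fine, and would finish the argument \emph{if} the modified construction went through. But the modified construction is exactly where the proposal has a genuine gap: the ``piggybacking'' of $D^{\xi,\ve}_\alpha$ on lower-$\xi$ ultrafilters is asserted, not proved, and it is the entire difficulty. Distinct ultrafilters on $\omega$ cannot properly extend one another, so ``coherent extension'' at a fixed coordinate $\beta$ can only mean $D^{\xi,\ve}_\beta=D^{\xi_0(\beta),\ve}_\beta$ for all $\xi\ge\xi_0(\beta)$. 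Then a single name $D^{\,\cdot\,,\ve}_\alpha$ must absorb the sets $A_{\bar p}$ for $\Delta$-systems following $h^*_{\xi,\ve}$ for \emph{every} relevant $\xi$; but the finite-intersection-property induction in Construction~\ref{constr} essentially uses that all systems feeding one ultrafilter follow a \emph{common} guardrail, so that the finitely many conditions $p^k_\ell(\beta)$ share a stem (resp.\ an $i_\beta$-value) and admit a common extension $q''(\beta)$. Two systems following $h^*_{\xi_1,\ve}$ and $h^*_{\xi_2,\ve}$ can disagree at a common support coordinate, their conditions there can be incompatible, and $A_{\bar p^1}\cap A_{\bar p^2}$ can then be forced empty below the two limits. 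If instead you keep the $D^{\xi,\ve}_\beta$ genuinely distinct for the $\lambda_5$ many $\xi$ with $\beta\in M_\xi$, then clause (c) demands closure of $Q_\beta$ under $\lambda_5$ many limit operations; each limit is a new real depending on countably many coordinates, so the closure iteration yields $|w_\beta|=\lambda_5\ge\lambda_4$, destroying both the cofinality of $(w_\alpha)_{\alpha\in S^4}$ in $[\delta_5]^{{<}\lambda_4}$ and the $(\RIV,\lambda_4)$-goodness (which needs $|Q_\alpha|<\lambda_4$). As written, the sketch must break either the filter-base induction or the cardinality bound, and no mechanism avoiding both is supplied.

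For comparison, the paper sidesteps this dilemma rather than solving it: it forces with a ${<}\chi$-closed, $\chi^+$-cc poset $\mR$ making $2^\chi=\lambda_5$, runs Construction~\ref{constr} in the extension $V^*$ (where Assumption~\ref{asm:chi} holds), pulls the parameter $\bar w$ back into $V$ using the fact that any $\mR$-name for an $\omega_1$-club subset of $[\alpha]^{{<}\lambda_4}$ contains a ground-model element, checks that $\Pa$ computed in $V$ densely embeds into the corresponding iteration $\PaB$ computed in $V^*$, and finally transfers $\mylinIII$ from $\PaB$ down to $\Pa$ by absoluteness (using $\chi<\kappa$ and the chain condition of $\mR$ to bound the relevant $\alpha$ in $V$). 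If you want to salvage an elementary-submodel argument, you would need to confront precisely the coherence problem above; the auxiliary-forcing route avoids it entirely.
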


For the proof, we will use the following easy observation:
\begin{lemma}\label{lem:club}
Assume $\chi$ is a cardinal and $B$
a set and $X^0\in [B]^\chi$, 
$\mR$ is a $\chi^+$-cc forcing notion,
and $C$ is an $\mR$-name such that
the empty condition forces 
that $C$ is an $\omega_1$-club
subset of $[B]^\chi$.
Then there is a set $X\supseteq X^0$ (in the ground model) such that the empty condition forces 
$X\in C$.
\end{lemma}
\begin{proof}
By induction, choose (in the ground model) sequences 
$X^\alpha, \tilde X^\alpha$ for $\alpha< \omega_1$ such that
$X^\alpha$ is in $[B]^\chi$, the sequence of the $X^\alpha$ is increasing with $\alpha$, $\tilde X^\alpha$
is an $R$-name, and the empty condition forces:
``$\tilde X^\alpha$ is in $C$ and is a  superset of $X^\alpha$;
and the sequence of the $\tilde X^\alpha$ is increasing (not necessarily continuous).''
Moreover, the empty condition forces $\tilde X^\alpha\subseteq X^{\alpha+1}$.
(In a  limit step $\gamma$, we set $X^\gamma=\bigcup_{\alpha<\gamma}X^\alpha$, and in a successor step $\alpha+1$ we use $\chi^+$-cc to cover the name $\tilde X^\alpha$.)
Then $X=\bigcup_{\alpha\in\omega_1}X^\alpha$ is as required.
\end{proof}

\begin{proof}[Proof of Lemma~\ref{lem:gch-construction}]
Let $\mR$ be a ${<}\chi$-closed $\chi^+$-cc p.o.\ that forces
$2^\chi=\lambda_5$.

In the $\mR$-extension $V^*$, Assumption~\ref{asm:chi} holds;
and Assumption~\ref{asm:P} still holds for the fixed parameters.\footnote{In particular, $(w_\alpha)_{\alpha\in S^i}$ is still cofinal in
$[\delta_5]^{{<}\lambda_i}$: For $i=1,2$, the forcing $\mR$
doesn't add any new elements of $[\delta_5]^{{<}\lambda_i}$ as 
$\mR$ is $\lambda_i$-closed;
for $i=3$ any new subset of $\delta_5$ of size $\theta<\lambda_3$
is contained in a ground model set of size at most $\theta\times \chi<\lambda_3$, as $\mR$ is $\chi^+$-cc.}

So in $V^*$, we can perform the inductive Construction~\ref{constr}, where now ``ground model'' refers to $V^*$, not $V$ (e.g.,
when we talk about determined positions,
or ground-model-code-sequences, etc). 
Actually, we can construct in $V$ the following, by induction on $\alpha\in \delta_5$, and starting
with some cofinal $\bar w^{\text{initial}}=(w^{\text{initial}}_\alpha)_{\alpha\in S^4}$ in $V$:
\begin{itemize}
\item An $\mR$-name $(D^\ve_\alpha)_{\ve\in \chi}$ (forced to be constructed) according to \ref{constr}(a,b,c).
\item If $\alpha\in S^4$, 
some $w_\alpha\supseteq w^\text{initial}_\alpha$ in $V$
such that $\mR$ forces $w_\alpha$ satisfies \ref{constr}(d).

(We can do this by Lemma~\ref{lem:club}, as the set of potential $w_\alpha$'s is an $\omega_1$-clubset of $[\alpha]^{{<}\lambda_4}$.)
\end{itemize}

So we get in $V$ a cofinal parameter $\bar w$ satisfying the following: 
%
In the $\mR$-extension $V^*$, the same parameters
define a forcing (call it 
$\PaB$)
satisfying 
$\mylinIII(\PaB,\kappa)$ in $V^*$.

$\PaB$ is basically the same as $\Pa$. More formally:
\begin{quote}
In the $\mR$-extension $V^*$,  $\Pa=(P_\alpha,Q_\alpha)_{\alpha<\delta_5}$ (the iteration constructed in $V$) is 
canonically densely embedded into $\PaB=(P^*_\alpha,Q^*_\alpha)_{\alpha<\delta_5}$ 
(the iteration constructed in $V^*$ using the same parameters).
\end{quote}

Proof: By induction, we show (in the $\mR$-extension) that $P^*_\alpha$ forces 
that $Q^*_\alpha$ (evaluated by the $P^*_\alpha$-generic) is equal to
$Q_\alpha$ (evaluated by the induced $P_\alpha$-generic, as per induction hypothesis):
Every element of $Q^*_\alpha$ is a Borel function (which already exists in $V$)
applied to the generics at a countable sequence of indices in $w_\alpha$ (which also already exists in $V$).

This implies:
\begin{quote}
In $V$, $\mylinIII(\Pa,\kappa)$ holds for all $\kappa\in[\lambda_3,\lambda_5]$, witnessed by the first $\kappa$ many Cohen reals.
\end{quote}

Proof: 
Let $y$ be a $\Pa$-name of a real.
In $V^*$, 
we can interpret $y$ as $\PaB$-name, and as 
$\mylinIII(\PaB,\kappa)$ holds, we get
$(\exists \alpha\in\kappa)\, (\forall \beta\in\kappa\setminus \alpha)
\PaB\forces  c_\beta \nleq^* y$, where $c_\beta$ is the Cohen added at $\beta$.
As $\chi<\kappa$, 
there is in $V$ an upper bound $\alpha^*<\kappa$ for the possible values of $\alpha$.
For any $\beta\in\kappa\setminus \alpha^*$,
we have (in $V$) $\Pa\forces  c_\beta \nleq^* y$ (by absoluteness).
\end{proof}

To summarize: 
\begin{theorem}\label{thm:Pa}
Assuming GCH and given $\lambda_i$ as in Assumption~\ref{asm:P},
we can find parameters\footnote{I.e., we 
set $\delta_5=\lambda_5+\lambda_5$,
and find $(S^i)_{i=1,\dots,4}$ and $\bar w=(w_\alpha)_{\alpha\in \delta_5}$.}
such that the FS ccc iteration $\Pa$ as defined in~\ref{def:Pa}  satisfies, for $i=1,2,3,4$:
\begin{itemize} 
\item  
$\mylini(\Pa,\kappa)$ holds for any regular cardinal $\kappa$ in $[\lambda_i,\lambda_5]$.
\item
$\myparti(\Pa,\lambda_i,\lambda_5)$ holds.
\end{itemize}
So in particular $\Pa$ forces $\addnull=\lambda_1$, $\covnull=\lambda_2$, 
$\mathfrak{b}=\lambda_3$, $\nonmeager=\lambda_4$
and
$\covmeager=\mathfrak{d}=\nonnull=\cofnull=\lambda_5=2^{\aleph_0}$.
\end{theorem}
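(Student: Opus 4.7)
The plan is to assemble the theorem as a direct corollary of the preparatory lemmas of this section. Essentially all the work has been done; this statement is a bookkeeping summary, so the proof proposal is to choose the right parameters and then cite the appropriate lemmas for each of the eight required conclusions.

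First, I would fix $\delta_5 = \lambda_5 + \lambda_5$ together with the partition $S^1,S^2,S^3,S^4$ and the sequences $(w_\alpha)_{\alpha \in S^i}$ for $i=1,2,3$ as mandated by Assumption~\ref{asm:P}; these are the ``fixed parameters.'' For the remaining cofinal parameter $\bar w = (w_\alpha)_{\alpha \in S^4}$, I would invoke the preceding lemma (the one just before the theorem, where we transferred the $\PaB$ construction from $V^*$ back to $V$) to obtain a specific choice of $\bar w$ for which $\mylinIII(\Pa,\kappa)$ holds for all regular $\kappa \in [\lambda_3,\lambda_5]$, witnessed by the first $\kappa$ many Cohen reals of the iteration. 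This is the non-routine input, and indeed it was the whole point of the preceding two subsections; without GCH-recovery via $\mR$, it would be the main obstacle.

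With $\bar w$ so chosen, the bullet-point conclusions are immediate. The $\mypart$ statements $\myparti(\Pa,\lambda_i,\lambda_5)$ for $i=1,2,3,4$ hold directly by Lemma~\ref{lem:partialPa}, whose proof uses the cofinality of $(w_\alpha)_{\alpha \in S^i}$ in $[\delta_5]^{<\lambda_i}$ and the specific nature of the iterands (partial amoeba, random, Hechler, eventually different). The $\mylin$ statements split into two cases: for $i=1,2,4$ they hold for all regular $\kappa \in [\lambda_i,\lambda_5]$ by Lemma~\ref{lem:linearPapartial} (which combined goodness from Lemma~\ref{lem:gettinggood} with Lemmas~\ref{lem:coboundedunbounded} and~\ref{lem:linearcharacteristics}); for $i=3$ they hold by the preceding lemma and the chosen $\bar w$.

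Finally, the specific values of the cardinal characteristics follow mechanically. For each $i$, Lemma~\ref{lem:partialcharacteristics} converts $\myparti(\Pa,\lambda_i,\lambda_5)$ into the lower bound on $\mathfrak b_i$ (i.e., $\addnull \ge \lambda_1$, $\covnull \ge \lambda_2$, $\mathfrak b \ge \lambda_3$, $\nonmeager \ge \lambda_4$) together with an upper bound $\lambda_5$ on the dual $\mathfrak d_i$. On the other side, Lemma~\ref{lem:linearcharacteristics} applied to $\mylini(\Pa,\lambda_i)$ gives the matching upper bound on $\mathfrak b_i$ and, applied to $\mylini(\Pa,\lambda_5)$, gives the matching lower bound $\lambda_5$ on $\mathfrak d_i$. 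Combining these for $i=1,2,3,4$ pins down $\addnull = \lambda_1$, $\covnull = \lambda_2$, $\mathfrak b = \lambda_3$, $\nonmeager = \lambda_4$, and $\nonnull = \cofnull = \covmeager = \lambda_5$. The equality $\mathfrak d = \lambda_5$ follows from $\mathfrak b_3 = \mathfrak b \le \mathfrak d \le \mathfrak d_3 \le \lambda_5$ combined with $\mylinIII(\Pa,\lambda_5)$; alternatively $\mathfrak d \le \covmeager = \lambda_5$ from Cicho\'n's diagram suffices. Finally $2^{\aleph_0} = \lambda_5$ because $\Pa$ is a FS ccc iteration of length $\delta_5 = \lambda_5 + \lambda_5$ over a GCH ground model with iterands of size at most $\lambda_5$, so a standard nice-name counting argument pins $2^{\aleph_0}$ to $\lambda_5$.
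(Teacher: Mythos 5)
Your proposal is correct and matches the paper exactly: Theorem~\ref{thm:Pa} is stated there as a summary (``To summarize:'') of Lemmas~\ref{lem:linearPapartial} and~\ref{lem:partialPa} together with the preceding lemma supplying $\mylinIII(\Pa,\kappa)$ via the GCH-recovery choice of $\bar w$, and the cardinal values are read off from Lemmas~\ref{lem:linearcharacteristics} and~\ref{lem:partialcharacteristics} plus the standard nice-name count for $2^{\aleph_0}$, just as you describe.
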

For the rest of the paper we fix these parameters
and thus the  forcing $\Pa$.


\section{Boolean ultrapowers}\label{sec:partB}

In Subsections~\ref{sec21} and~~\ref{sec22} we describe how to get an elementary
embedding (which we call a \emph{BUP-embedding}) $j:V\to M$ with $\crit(j)=\kappa$ and $\cf(j(\kappa))=|j(\kappa)|=\theta$,
assuming $\kappa$ is strongly compact and $\theta>\kappa$ is a regular cardinal with $\theta^\kappa=\theta$.

In Subsections~\ref{sec23} and~\ref{sec24} we show how to use such embeddings to 
transform a ccc forcing $P$ to $j(P)$ while preserving some of the
values forced to the entries of Cicho\'n's diagram (and changing others).

\subsection{Boolean ultrapowers}\label{sec21}

Boolean ultrapowers generalize ordinary ultrapowers by using arbitrary Boolean algebras
instead of the power set algebra.

We assume that  $\kappa$ is strongly compact
and that $B$ is a $\kappa$-distributive,  $\kappa^+$-cc,
atomless complete Boolean algebra.
Then 
every $\kappa$-complete filter in $B$ can be extended to a $\kappa$-complete ultrafilter $U$.\footnote{For this, neither $\kappa^+$-cc nor atomless is  required,
and $\kappa$-complete is sufficient. 
The proof is straightforward; the first proof that we are aware of has been published in~\cite{MR0166107}.} 
Also, there is a 
maximal antichain $A_0$ in $B$ of size $\kappa$
such that $A_0\cap U=\emptyset$ (i.e., $U$ is not $\kappa^+$-complete).\footnote{Proof: Let $A$ be a maximal antichain in the open dense set 
$B\setminus U$; by $\kappa^+$-cc $|A|\le \kappa$. And $A$ cannot have size ${<}\kappa$, as otherwise it would meet the $\kappa$-complete $U$.}

For now, fix some $\kappa$-complete ultrafilter $U$.

The Boolean algebra $B$ can be used as forcing notion. 
As usual, $V$ (or: the ground model) denotes the universe we ``start with''. 
In the following, we will not actually force with $B$ 
(and in this subsection and the following subsection,
we will not force with anything, we always remain in $V$);
but we still use forcing notation.
In particular, we call the usual $B$-names ``forcing names''. 

A \textdef{BUP-name} (or: labeled antichain) $x$
is a function $A\to V$ whose domain is a
maximal antichain of $B$.
We may write $A(x)$ to denote $A$.

Each BUP-name corresponds to a forcing name\footnote{more specifically, to the forcing name $\{(\widecheck{x(a)},a):\, a\in A(x)\}$.} for an element of $V$. We will identify the BUP-name 
and the corresponding forcing name. 
In turn, every forcing name $\tau$
for an element of $V$ has a forcing-equivalent BUP-name.
In particular there is a \textdef{standard BUP-name}
$\check v$ for each $v\in V$.

We can calculate,
for two BUP-names $x$ and $y$, 
the  Boolean value $ \lBrack x=y\rBrack$.
We call $x$ and $y$ \textdef{equivalent},
if $ \lBrack x=y\rBrack\in U$ (the $\kappa$-complete ultrafilter fixed above).

For example, any two standard BUP-names for the same $v\in V$ trivially are equivalent (as $\mathbb 1_B\in U$).
So we can speak (modulo equivalence) of \emph{the}  standard BUP-name for $v$.

The \textdef{Boolean ultrapower} $M^-$ 
consists of the equivalence classes $[x]$ of BUP-names $x$;
and we define $[x]\in^- [y]$ by $\lBrack x\in y\rBrack\in U$.
We are interested in the $\in$-structure
 $(M^-,\in^-)$.
We let $j^-:V\to M^-$ map $v$ to $[\check v]$.

Given  BUP-names $x_1,\dots,x_n$ and an $\in$-formula $\varphi$,
the truth value
$\lBrack \varphi^V(x_1,\dots,x_n)\rBrack$ is well defined
(it is the weakest element of $B$ forcing that in the ground model $\varphi(x_1,\dots,x_n)$ holds,
which makes sense as $x_1,\dots,x_n$ are guaranteed to be in the ground model).

A straightforward induction (which can be found in~\cite[Sec.~2]{ourselves}) shows:
\begin{itemize}
\item {\L}o{\'{s}}'s theorem: $(M^-,\in^-)\models \varphi([x_1],\dots,[x_n])$ iff
$\lBrack \varphi^V(x_1,\dots,x_n)\rBrack \in U$.
\item
$j^-:(V,\in)\to (M^-,\in^-)$ is an elementary embedding.
\item In particular, $(M^-,\in^-)$ is a  ZFC model.

\end{itemize}
%

As $U$ is $\sigma$-complete, $(M^-,\in^-)$ is wellfounded.
So we let $M$ be the transitive collapse of $(M^-,\in^-)$, and 
let $j:V\to M$ be the composition of $j^-$ with the collapse.
We denote the collapse of $[x]$ by $\BUP{x}$.
So in particular $\BUP{\check v}=j(v)$.

\begin{facts}
\begin{itemize}
\item $M\models \varphi(\BUP{x_1},\dots,\BUP{x_n})$ iff $\lBrack \varphi^V(x_1,\dots,x_n)\rBrack \in U$. In particular, $j:V\to M$ is an elementary embedding.
\item 
If $|Y|<\kappa$, then $j(Y)=j''Y$.
In particular, $j$ restricted to $\kappa$ is the identity.
$M$ is closed under ${<}\kappa$-sequences.
\item
$j(\kappa)\neq \kappa$, i.e., $\kappa=\crit(j)$.
\end{itemize}
\end{facts}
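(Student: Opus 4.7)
The plan is to deduce each item from the {\L}o{\'{s}} theorem for $(M^-,\in^-)$ stated just above, together with $\kappa$-completeness of $U$ and the two properties of $B$ noted at the start of the subsection: $\kappa$-distributivity, and the existence of a $\kappa$-sized antichain $A_0$ disjoint from $U$. The first bullet is pure bookkeeping: since $M$ is by construction the Mostowski collapse of $(M^-,\in^-)$, every $\in$-formula is preserved under the collapse, so $M\models\varphi(\BUP{x_1},\ldots,\BUP{x_n})$ iff $(M^-,\in^-)\models\varphi([x_1],\ldots,[x_n])$, which by the preceding {\L}o{\'{s}} theorem is equivalent to $\lBrack\varphi^V(x_1,\ldots,x_n)\rBrack\in U$; elementarity of $j$ then descends from that of $j^-$.

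For the second bullet, $j''Y\subseteq j(Y)$ is immediate from elementarity applied to the atomic formula ``$y\in Y$''. For the converse, I take $a=\BUP{x}\in j(Y)$ and observe that $\lBrack x\in\check Y\rBrack\in U$; since $Y\in V$, this Boolean value equals $\bigvee_{y\in Y}\lBrack x=\check y\rBrack$, a join of fewer than $\kappa$ elements of $B$. The key observation is that a $\kappa$-complete ultrafilter is prime with respect to such small joins (otherwise all complements would lie in $U$, and $\kappa$-completeness would force the complement of the whole join into $U$, a contradiction), so $\lBrack x=\check y\rBrack\in U$ for some $y\in Y$, giving $a=j(y)\in j''Y$. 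Applying this with $Y=\alpha$ and inducting on $\alpha<\kappa$ yields $j\on\kappa=\mathrm{id}$. For $<\kappa$-closure of $M$, given $(a_\xi)_{\xi<\mu}$ in $M$ with $\mu<\kappa$ and representatives $a_\xi=\BUP{x_\xi}$, I would invoke $\kappa$-distributivity of $B$ to choose a common refinement $A$ of the antichains $A(x_\xi)$ and define a single BUP-name $y$ on $A$ by $y(a)=(x_\xi(b_\xi(a)))_{\xi<\mu}$, where $b_\xi(a)\in A(x_\xi)$ is the unique element above $a$; a direct verification (using that $j(\mu)=\mu$ from the previous paragraph) shows that $\BUP{y}$ is the intended sequence in $M$.

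For the third bullet, let $A_0=(a_\xi)_{\xi<\kappa}$ be the antichain of size $\kappa$ disjoint from $U$, and consider the BUP-name $x$ on $A_0$ with $x(a_\xi)=\xi$. Then $\lBrack x<\check\kappa\rBrack=1\in U$, so $\BUP{x}<j(\kappa)$ in $M$; if $j(\kappa)$ were equal to $\kappa$, the second bullet would force $\BUP{x}=j(\alpha)$ for some $\alpha<\kappa$, i.e., $\lBrack x=\check\alpha\rBrack=a_\alpha\in U$, contradicting $A_0\cap U=\emptyset$. The only genuinely substantive step across all three items is the primeness of $\kappa$-complete ultrafilters under $<\kappa$-joins, which drives both the ``$j(Y)=j''Y$'' equality and, via the contradiction just described, the strict inequality $j(\kappa)>\kappa$; everything else is unpacking of the BUP-name formalism.
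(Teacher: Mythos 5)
Your proof is correct and is exactly the standard argument that the paper leaves implicit (it states these as unproved ``Facts'', deferring details to the companion paper): {\L}o\'s plus the Mostowski collapse for the first item, primeness of the $\kappa$-complete ultrafilter under joins of size ${<}\kappa$ for $j(Y)=j''Y$ and hence $j\on\kappa=\mathrm{id}$, $\kappa$-distributivity of $B$ to merge ${<}\kappa$ many labeled antichains into one name for the closure claim, and the $\kappa$-sized antichain $A_0$ disjoint from $U$ to push $\BUP{x}$ strictly between $\kappa$ and $j(\kappa)$. All steps check out, including the two points that actually need care: that the join $\bigvee_{y\in Y}\lBrack x=\check y\rBrack$ has fewer than $\kappa$ terms, and that $j(\mu)=\mu$ is needed so that the merged name really denotes a $\mu$-sequence in $M$.
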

%

As we have already mentioned, an arbitrary forcing name for an element of $V$
has a forcing-equivalent BUP-name, i.e., a maximal antichain labeled
with elements of $V$.
If $\tau$ is a forcing name for an element of $Y$ ($Y\in V$),
then without loss of generality $\tau$ corresponds to a maximal antichain
labeled with elements of $Y$. We call such an object $y$ a ``BUP-name for an element
of $j(Y)$'' (and not ``for an element of $Y$'', for the obvious reason: unlike
in the case of a forcing extension, $\BUP{y}$ is generally not in $Y$,
but, by definition of $\in^-$, it is in $j(Y)$).

\begin{lemma}
If the partial order $(S,\le)$ is ${\le}\kappa$-directed, then $j''S$ is cofinal in $j(S)$.
\end{lemma}

\begin{proof}
Let $\BUP{x}$ be some element of $j(S)$; without loss of generality we can
assume that $x$ is a labeled antichain which only uses elements of $S$ as labels.
The size of the antichain is at most $\kappa$, so all labels have some common upper bound
$s_0$. Then $\lBrack x\le s_0\rBrack$ is $\mathbb{1}_B$, and thus in $U$; so
$(M^-,\in^-)\models [x]\le \check{s_0}$,
i.e., $j(s_0)\ge \BUP{x}$ as required.
\end{proof}

For later reference, let us summarize what we know about $j$ in the form of a definition:
\begin{definition}
A BUP-embedding is an elementary embedding
$j:V\to M$ ($M$ transitive) with critical point $\kappa$,
such that $M$ is ${<}\kappa$-closed and such that
$j''S$ is cofinal in $j(S)$ for every ${\le}\kappa$-directed partial order $S$.
\end{definition}
So the embedding $j$ defined as above for 
a $\kappa$-distributive, $\kappa^+$-cc atomless complete Boolean algebra
and a $\kappa$-complete ultrafilter $U$ is a BUP-embedding.

\begin{lemma}
Let $j$ be a BUP-embedding with $\crit(j)=\kappa$.
\begin{itemize}
\item\label{item:c2}
If $|A|<\kappa$, then $j''A=j(A)$.
\item
If $S$ is a ${<}\lambda$-directed partial order for some regular $\lambda<\kappa$,
then $j(S)$ is ${<}\lambda$-directed.
\item\label{item:cofinalities}
If $\cf(\alpha)\neq \kappa$ , then
$j''\alpha$ is cofinal in $j(\alpha)$, so in particular
$\cf(j(\alpha))=\cf(\alpha)$.    
\end{itemize}
\end{lemma}

\begin{proof}
For the second item, use that $M$ believes that $j(S)$ is ${<}\lambda$-directed 
and that $M$ is ${<}\kappa$-closed.
For the last item, assume $\cf(\alpha)=\lambda\ne \kappa$, witnessed by 
some strictly increasing cofinal function $f:\lambda\to \alpha$.
If $\lambda<\kappa$, then 
$M$ thinks that $j(f)$ is strictly increasing cofinal from $j(\lambda)=\lambda$
to $j(\alpha)$, which is absolute. If $\lambda>\kappa$, 
then $\alpha$ is a ${\le}\kappa$-directed (linear) order,
so $j''\alpha$ is cofinal in $j(\alpha)$.
So $j''f$, i.e.\ $(j(\zeta), j(f(\zeta)))_{\zeta\in \lambda}$, 
witnesses that $\cf(j''\lambda)=\cf(j''\alpha)=\cf(j(\alpha))$, and 
$\cf(j''\lambda)=\cf(\lambda)=\lambda$ (as these orders are isomorphic).
\end{proof}
\subsection{The algebra and the filter}\label{sec22}

For a strongly compact cardinal we can get large $\cf(j(\kappa))$:

\begin{lemma}\label{lem:embed}
Let $\kappa$ be strongly compact,  $\theta>\kappa$ and $\cf(\theta)>\kappa$.
Then there is a BUP-embedding $j$ with $\crit(j)=\kappa$ such that 
\begin{enumerate}
\item $\cf(j(\kappa))=\cf(\theta)$ and $j(\kappa)\ge \theta$.
\item $|j(\mu)|\le \max(\mu,\theta)^\kappa$ for any $\mu$.
\item\label{item:baal666} In particular, if $\theta^\kappa=\theta$ and $\kappa\le\mu\le\theta$ then $|j(\mu)|=\theta$.
\end{enumerate}
\end{lemma}

We will use this in the following form:

\begin{definition}
A ``BUP-embedding from $\kappa$ to $\theta$'' is a BUP-embedding $j$
with critical point $\kappa$ such that $\cf(j(\kappa))=|j(\kappa)|=\theta$ (in particular $\kappa$
and $\theta$ are regular).
\end{definition}

The lemma immediately implies:

\begin{corollary}\label{cor:embed}
Assume $\kappa$ is strongly compact and $\theta>\kappa$ is a 
regular cardinal such that 
$\theta^{\kappa}=\theta$. Then there is a BUP-embedding $j$ from
$\kappa$ to $\theta$. (And $|j(\mu)|=\theta$ whenever $\kappa\le\mu\le\theta$.)
\end{corollary}

\begin{proof}[Proof of Lemma~\ref{lem:embed}]
Let $B$
be the complete Boolean algebra generated by the forcing notion $P_{\kappa,\theta}$
consisting of partial functions from $\theta$ to $\kappa$ with 
domain of size ${<}\kappa$, ordered by extension.
Clearly $B$ is ${<}\kappa$-distributive 
(as $P_{\kappa,\theta}$ is even ${<}\kappa$-closed) and $\kappa^+$-cc.

%
The forcing adds a canonical generic function $f^*:\theta\to\kappa$.
So for each $\delta\in\theta$, 
$f^*(\delta)$ is a forcing name for an element of $\kappa$, and thus a BUP-name
for an element of $j(\kappa)$.

Let $x$ be some other BUP-name for an element of $j(\kappa)$,
i.e., an antichain $A$ of size $\kappa$
labeled  with elements of $\kappa$. 
As $P_{\kappa, \theta}$ is dense in $B\setminus\{\mathbb{0}_B\}$,
we can assume that $A\subseteq P_{\kappa, \theta}$. 
Let $\delta\in\theta$ be bigger than
the supremum of the domain of $a$
for each $a\in A$.
We call such a pair $(x,\delta)$ ``suitable'', and set
$b_{x,\delta} \coloneq \lBrack  f^*(\delta)>x\rBrack $.
We claim that these elements generate a $\kappa$-complete filter.
To see this, fix suitable pairs
$(x_i,\delta_i)$ for $i<\mu<\kappa$; we have to show that $\bigwedge_{i\in\mu} b_{x_i,\delta_i}\neq \mathbb{0}$. Enumerate $\{\delta_i:\, i\in \mu\}$ increasing (and without repetitions)
as $\delta^\ell$ for $\ell\in\gamma\le \mu$. 
Set $A_\ell=\{i:\, \delta_i=\delta^\ell\}$.
Given $q_\ell$,  
define $q_{\ell+1}\in P_{\kappa,\theta}$ as follows:
$q_{\ell+1}\le q_\ell$; $\delta^\ell\in \supp(q_{\ell+1})\subseteq \delta^\ell\cup\{\delta^\ell\}$; and
$q_{\ell+1}\restriction \delta^\ell$ 
decides for all $i\in A_\ell$ 
the values of $x_i$ to be some $\alpha_i$; and 
$q_{\ell+1}(\delta^\ell)=\sup_{i\in A_\ell}(\alpha_i)+1$.
This ensures that $q_{\ell+1}$ is stronger than $b_{x_i,\delta_i}$ for $i\in A_\ell$.
For $\ell\le\gamma$ limit, let $q_\ell$ be the union of $\{ q_k:\, k<\ell\}$. Then $q_{\gamma}$ is stronger than each $b_{x_i,\delta_i}$.

As $\kappa$ is strongly compact, we can extend the $\kappa$-complete filter generated by all $b_{x_i,\delta_i}$
to a $\kappa$-complete ultrafilter $U$.
Then the sequence $\BUP{f^*(\delta)}_{\delta\in \theta}$ 
is strictly increasing (as $(f^*(\delta),\delta')$ is suitable
for all $\delta<\delta'$) and cofinal in $j(\kappa)$ (as we have just seen); 
so $\cf(j(\kappa))=\cf(\theta)$ and $j(\kappa)\ge \theta$. 

To get an upper bound for $j(\mu)$ for any cardinal $\mu$, we count all possible BUP-names for elements of $j(\mu)$. As we can assume
that the antichains are subsets of $P_{\kappa,\theta}$, which has size
$\theta^{<\kappa}$, 
we get the upper bound $|j(\mu)|\le [\theta^{<\kappa}]^{\kappa}\times \mu^{\kappa}=\max(\theta,\mu)^\kappa$.
\end{proof}

\subsection{The ultrapower of a forcing notion}\label{ss:forcingBUP}\label{sec23}

We now investigate the relation of a forcing notion $P\in V$
and its image $j(P)\in M$, which we use 
as forcing notion over $V$. (Think of $P$ 
as being one of the forcings of Section~\ref{sec:partA}; it has no relation with the Boolean algebra $B$ used to construct $j$.)

Note that as $j(P)\in M$ and $M$ is transitive, every $j(P)$-generic filter $G$ over
$V$ is trivially generic over $M$ as well, and we will use absoluteness between $M[G]$ and $V[G]$ to prove various properties of $j(P)$.

\begin{lemma}\label{lem:aa}
Let $j:V\to M$ be elementary,
$M$ transitive and ${<}\kappa$-closed with $\crit(j)=\kappa$.
Assume that $P$ is $\nu$-cc for some $\nu<\kappa$. 
\begin{enumerate}
    \item\label{item:aa1} $j(P)$ is $\nu$-cc.
    \item\label{item:aa2}  If $\tau$ is (in $V$) a $j(P)$-name for an element of $M[G]$,
    then there is a $j(P)$-name $\sigma$ in $M$ such that the empty condition
    forces $\sigma=\tau$.
    \item\label{item:aa2b}
    In particular, every $j(P)$-name for a real, a Borel-code, 
    a countable sequence of reals, etc., is in $M$ (more formally: 
    has an equivalent name in $M$).
    \item\label{item:aa3}  $M[G]$ is ${<}\kappa$-closed in $V[G]$.
    \item\label{item:size} If $\xi<\kappa$ and $P$ forces $2^\xi=\lambda$, then
    $j(P)$ forces $2^\xi=|j(\lambda)|$.
    \item\label{item:aa4} 
    $j''P$, which is isomorphic to $P$ via $j$, 
    is a complete subforcing of $j(P)$. 
\end{enumerate}
\end{lemma}
\begin{proof}
(\ref{item:aa1}): If $A\subseteq j(P)$ has size $\nu$, then $A\in M$,
and by elementarity $M$ thinks that $A$ is not an antichain, which is absolute.

(\ref{item:aa2}): $\tau$ corresponds to $(A,f)$ where $A\subseteq j(P)$ is a maximal
antichain and $f:A\to M$ maps $a$ to a $j(P)$-name in $M$.
As $j(P)$ is $\nu$-cc and $M$ ${<}\kappa$-closed, $(A,f)$ is in $M$ and we can interpret
in $M$ $(A,f)$ as a $j(P)$-name $\sigma$.

This immediately implies (\ref{item:aa2b})
and
(\ref{item:aa3}): 
Given a $j(P)$-name $\tau$ for 
a $\zeta$-sequence of elements of $M[G]$, $\zeta<\kappa$,
we can interpret $\tau$ as a $\zeta$-sequence 
of names $(\tau_i)_{i<\zeta}$, and find for each 
$\tau_i$ an equivalent $j(P)$-name $\sigma_i$ in $M$.
As $M$ is ${<}\kappa$-closed, the sequence $(\sigma_i)_{i<\zeta}$ is in $M$ and defines a $j(P)$-name in $M$ equivalent to $\tau$.

(And if $\tau$ is a $j(P)$-name for a ${<}\kappa$-sequence
in $M[G]$, we can use the fact that $\kappa$ is regular and that 
$j(P)$ is $\kappa$-cc to get 
a bound $\zeta<\kappa$ for the length of $\tau$.)

(\ref{item:size})
$M[G]$ thinks that $|2^{\xi}|=j(\lambda)$,
and $2^{\xi}\cap V[G]=2^{\xi}\cap M[G]$.

(\ref{item:aa4}): It is clear that $j''P$ is an incompatibility-preserving 
subforcing of $j(P)$: 
$j(p)\le j(q)$ in $j''P$ iff 
$p\le q$ in $P$ (by definition) 
iff $M$ thinks that $j(p)\le j(q)$ in $j(P)$ (by elementarity)
iff this holds in $V$ (by absoluteness); and the same argument works for compatibility
instead of $\le$.
Similarly, assume 
$A\subseteq j''P$ is a maximal antichain. By definition,
$B\coloneq j^{-1}(A)\subseteq P$ is one as well, and in particular of size ${<}\nu$.
Therefore $j(B)=B$, and by elementarity $M$ thinks that $B\subseteq j(P)$ is maximal,
which holds in $V$ by absoluteness.
\end{proof}

To round off the picture, let us mention the following fact (which is however,
not required for the rest of the paper):
\begin{lemma}\label{lem:lemwhichisnotneededeverorisit}
If $P=(P_\alpha,Q_\alpha)_{\alpha<\delta}$ is a finite support (FS) ccc iteration of length $\delta$, then 
$j(P)$ is a FS ccc iteration of length $j(\delta)$ (more formally: it is canonically equivalent to one).
\end{lemma}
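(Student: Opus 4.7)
The plan is to get the result from a single application of elementarity together with a closure argument on $M$. Since $j\colon V\to M$ is elementary and $P=(P_\alpha,Q_\alpha)_{\alpha<\delta}$ satisfies in $V$ the (first-order) statement ``$P$ is a finite support ccc iteration of length $\delta$ with iterands $Q_\alpha$'', the model $M$ sees that $j(P)=(P'_\alpha,Q'_\alpha)_{\alpha<j(\delta)}$ is a finite support ccc iteration of length $j(\delta)$, where I write $P'_\alpha, Q'_\alpha$ for the intended iterands in $M$. It remains to argue that each ingredient of ``finite support ccc iteration'' is absolute enough between $M$ and $V$ to conclude that $j(P)$ really is such an iteration when viewed as a forcing notion in $V$.

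For the iteration structure itself (conditions are functions with domain a subset of $j(\delta)$; $p(\alpha)$ is a $P'_\alpha$-name for an element of $Q'_\alpha$; $P'_{\alpha+1}=P'_\alpha\ast Q'_\alpha$; at limits we take inverse limit / direct limit according to the support) the defining formulae are $\Delta_0$ modulo the forcing relation for the respective $P'_\alpha$, and the relevant forcing names live inside $M$; so these properties transfer verbatim from $M$ to $V$. Likewise, ``finite support'' is absolute since finiteness is absolute: an element of $j(P)$ (as a set in $M$) has finite support in $M$ iff it has finite support in $V$, because its support is the same set in either model.

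The one nontrivial point is that ccc in $M$ should imply ccc in $V$. Here I would invoke Lemma~\ref{lem:embed}(\ref{item:a}), which gives $M^{{<}\kappa}\subseteq M$, together with $\omega_1<\kappa$. Suppose for contradiction that some $A\subseteq j(P)$ in $V$ were an uncountable antichain; by thinning we may assume $|A|=\aleph_1<\kappa$. Since $A$ is a ${<}\kappa$-sequence of elements of $M$, the closure of $M$ gives $A\in M$. But then $M$ sees an uncountable antichain in $j(P)$, contradicting the ccc of $j(P)$ in $M$ obtained from elementarity. Thus ccc transfers, and $j(P)$ is, in $V$, a FS ccc iteration of length $j(\delta)$ with iterands $j(P)_\alpha,j(Q)_\alpha$. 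The anticipated obstacle is precisely this ccc transfer, and it is handled cleanly by the ${<}\kappa$-closure of $M$; the rest is routine absoluteness.
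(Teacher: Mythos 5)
Your ccc argument is exactly the paper's: an antichain of size $\aleph_1$ is a ${<}\kappa$-sequence of elements of $M$, hence lies in $M$ by closure, contradicting elementarity. That part is fine. The gap is in the structural part, where you claim that ``being a FS iteration'' transfers verbatim from $M$ to $V$ because the defining formulae are $\Delta_0$ modulo the forcing relation. This is not true, and the parenthetical ``more formally: it is canonically equivalent to one'' in the statement is there precisely because the literal transfer fails. The point is that the notion ``$q$ is a $P^*_\alpha$-name for an element of $Q^*_\alpha$'' is not absolute between $M$ and $V$: since $M$ is only ${<}\kappa$-closed, $V$ contains strictly more such names than $M$ does. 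Consequently the successor stage $P^*_{\alpha+1}=(P^*_\alpha * Q^*_\alpha)^M$ computed inside $M$ is in general a \emph{proper subset} of the two-step iteration $(\tilde P_\alpha * \tilde Q_\alpha)^V$ computed in $V$, so $j(P)$ is literally not a FS iteration in $V$; it only embeds into one.

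What actually has to be proved — and what the paper proves — is that this embedding is \emph{dense}: one defines by induction the genuine $V$-iteration $(\tilde P_\alpha,\tilde Q_\alpha)_{\alpha<j(\delta)}$, taking $\tilde Q_\alpha$ to be $Q^*_\alpha$ reinterpreted as a $\tilde P_\alpha$-name, and shows $P^*_\alpha$ is dense in $\tilde P_\alpha$. The successor step is the nontrivial one: given $(p,q)\in\tilde P_\alpha * \tilde Q_\alpha$ in $V$, strengthen $p$ to decide $q$ to be a specific name $q'\in M$, then use the induction hypothesis to push the first coordinate into $P^*_\alpha$, landing in $P^*_{\alpha+1}$. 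Your proposal is missing this density argument entirely, and without it you have no justification for treating $j(P)$ as (equivalent to) a FS ccc iteration in $V$ — which is what all the later applications (goodness, the $\mylin$ and $\mypart$ transfer) rely on.
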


\begin{proof}
$M$ certainly thinks that $j(P)=(P^*_\alpha,Q^*_\alpha)_{\alpha<j(\delta)}$ is a
FS iteration of length $j(\delta)$.

By induction on $\alpha$ we define the 
FS ccc iteration $(\tilde P_\alpha,\tilde Q_\alpha)_{\alpha<j(\delta)}$ and show that 
$P^*_\alpha$ is a dense subforcing of  $\tilde P_\alpha$:
Assume this is already the case for $P^*_\alpha$.
$M$ thinks that $Q^*_\alpha$ is a $P^*_\alpha$-name,
so we can interpret it as $\tilde P_\alpha$-name
and use it as $\tilde Q_\alpha$.
Assume that $(p,q)$ is an element (in $V$) of $\tilde P_\alpha*\tilde Q_\alpha$.
So $p$ forces that $q$ is a name in $M$; we can strengthen $p$
to some $p'$ that decides $q$ to be the name $q'\in M$. 
By induction we can further strengthen $p'$ to $p''\in P^*_\alpha$,
then $(p'',q')\in P^*_{\alpha+1}$ is stronger than $(p,q)$.
(At limits there is nothing to do, as we use FS iterations.)

$j(P)$ is ccc according to Lemma~\ref{lem:aa}(\ref{item:aa1}).
\end{proof}

\subsection{Preservation of values of characteristics}\label{sec24}

Recall Definition~\ref{def:linear} of $\mylini$;
and Definitions~\ref{def:partial} and \ref{def:partial2} of $\myparti$.
\begin{lemma}\label{lem:bbb}
Assume\footnote{For most of the Lemma,
the requirements of Lemma~\ref{lem:aa} are sufficient: 
We use ccc only to simplify notation as we do not have to indicate 
where we calculate cofinalities (in $V$ or the 
$j(P)$ extensions $V[G]$); 
and we need BUP-embedding for 
 the last part of (2) only.}
that
$P$ is ccc
and that $j$ is a BUP-embedding with critical point $\kappa$.
\begin{itemize}
\item[(1)]\label{item:linerapreserved} $\mylini(P,\delta)$ implies  $\mylini(j(P),j(\delta))$.
    \\
    So if 
$\lambda\neq \kappa$ regular, then $\mylini(P,\lambda)$ implies  $\mylini(j(P),\lambda)$.
\item[(2)]\label{item:partialpreserved}
Assume $\myparti(P,\lambda,\mu)$. If $\kappa> \lambda$,
then $\myparti(j(P),\lambda,|j(\mu)|)$;
if 
$\kappa< \lambda$,
then $\myparti(j(P),\lambda,\mu)$.
\end{itemize}
\end{lemma}

\begin{proof}
(1)
Let $\bar x=(x_\alpha)_{\alpha<\delta}$ be the sequence of $P$-names
witnessing $\mylini(P,\delta)$.
So $M$ thinks:
For every $j(P)$-name $y$ of a real 
$(\exists \alpha\in j(\delta))\, (\forall \beta \in j(\delta)\setminus \alpha)\, \lnot\,\bigl( (j(\bar x))_\beta \Ri y\bigr)$.
This is absolute, so $j(\bar x)$ witnesses $\mylini(j(P),j(\delta))$.

The second claim follows from the fact that $\mylini(j(P),j(\delta))$ is equivalent to\linebreak $\mylini(j(P),\cf(j(\delta)))$ and that $\cf(j(\lambda))=\lambda$ for regular $\lambda\neq \kappa$. 

(2)
Let $(S,\prec)$ and $\bar g$ witness $\myparti(P,\lambda,\mu)$.
$M$ thinks that
\begin{equation}\tag{$*$}\label{eq:ff}
\text{for each $j(P)$-name $f$: }
(\exists s\in j(S))\,(\forall t\in j(S))\ (\, t\succ s\rightarrow  j(P)\forces 
f \Ri j(\bar{g})_t\,)
\end{equation}
(or, in the case $i=2$, $j(P)\forces 
j(\bar{g})_t\notin f$, where $f$ is the name of a null set).
This is true in $V$ as well: If $f$ is a $j(P)$-name for a real, then we can assume 
$f\in M$, and so we can find $s\in j(S)$ such that for all $t\succ s$,
$M[G]\models f \Ri j(\bar g)_t$, which holds in $V[G]$ as well, as $\Ri$ is absolute.

If $\lambda<\kappa$, then $j(\lambda)=\lambda$,  and $j(S)$ is $\lambda$-directed in $M$ and therefore in $V$ as well, so we get $\myparti(j(P),\lambda,|j(\mu)|)$.

So assume $\lambda>\kappa$.
We claim that $j''(S)$ and $j''\bar g$ witness $\myparti(j(P),\lambda,\mu)$.
$j''S$ is isomorphic to $S$, so directedness is trivial. 
Given a $j(P)$-name $f$ of a real, without loss of generality in $M$, there is 
in $M$ a cone with tip $s\in j(S)$ as in~\eqref{eq:ff}. As $j''S$ is cofinal in $j(S)$ 
there is some $s'\in S$ such that $j(s')\succ s$.
Then for all $t\succ s'$, i.e., $j(t)\succ j(s')$, we get 
$j(P)\forces f \Ri j(g_t)$. (Or, in case $i=2$, $j(P)\forces j(g_t)\notin f$).
\end{proof}

We list the specific cases that we will use:
\begin{corollary}\label{cor:bbb}
Let $j$ be a BUP embedding from $\kappa$ to $\theta$.
\begin{enumerate}[(a)]
\item\label{item:linerapreservedcor} $\mylini(P,\lambda)$ for a regular $\lambda\ne\kappa$ implies $\mylini(j(P),\lambda)$.
\item\label{item:linerapreservedcorb}
$\mylini(P,\kappa)$ implies $\mylini(j(P),\theta)$.
\item\label{item:partialpreservedcor}
$\myparti(P,\lambda,\mu)$ for $\kappa> \lambda$ and $\kappa\le\mu\le\theta$ implies 
$\myparti(j(P),\lambda,\theta)$.
\item\label{item:partialpreservedcord}
$\myparti(P,\lambda,\mu)$ for 
$\kappa< \lambda$ implies $\myparti(j(P),\lambda,\mu)$.
\end{enumerate}

\end{corollary}

\section{A finite iteration of BUP embeddings}\label{sec:partC}

We now have everything required for the main result:

\begin{theorem}
Assume GCH and that $\aleph_1<\kappa_9<\lambda_1<\kappa_8<\lambda_2<\kappa_7<\lambda_3<\kappa_6<\lambda_4<\lambda_5<\lambda_6<\lambda_7<\lambda_8<\lambda_9$
are regular, $\lambda_3$ a successor of a regular cardinal, 
$\lambda_i$ not successor of a cardinal with countable cofinality for $i=1,2,4,5$,
and $\kappa_i$ strongly compact for $i=6,7,8,9$.
Then there is a ccc forcing notion $\PaIX$ resulting in: 
\begin{multline*} 
\addnull=\lambda_1 < \covnull =\lambda_2 <\mathfrak{b}=\lambda_3<\nonmeager=\lambda_4<\\ <\covmeager=\lambda_5 < \mathfrak{d}=\lambda_6 < \nonnull=\lambda_7 < \cofnull=\lambda_8< 2^{\aleph_0}=\lambda_9.\end{multline*}
\end{theorem}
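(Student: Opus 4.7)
The plan is to define $\PaIX$ as an iterated Boolean ultrapower of $\Pa$, applying Lemma~\ref{lem:embed} four times: set $\mathbb{P}^{k}\coloneq j_{k}(\mathbb{P}^{k-1})$ for $k=6,7,8,9$, starting from $\Pa$, where each $j_{k}$ is the ultrapower at $\kappa_{k}$ with parameter $\theta_{k}=\lambda_{k}$. Because $\kappa_{9}<\kappa_{8}<\kappa_{7}<\kappa_{6}$, strong compactness of each $\kappa_{k}$ is preserved in all preceding ultrapower models, so the tower is legitimate and $\PaIX$ is an FS ccc iteration whose length has cardinality $|j_{9}\circ\cdots\circ j_{6}(\delta_{5})|=\lambda_{9}$. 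The bound $2^{\aleph_{0}}\le\lambda_{9}$ follows from this length, while $2^{\aleph_{0}}\ge\lambda_{9}$ follows from the fact that the initial Cohen block of $\Pa$ is mapped to a block of $\lambda_{9}$ many mutually generic Cohen reals.

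The remaining eight entries will be read off by tracking $\mylini$ and $\myparti$ through the tower via Lemmas~\ref{lem:linerapreserved} and~\ref{lem:partialpreserved}. Theorem~\ref{thm:Pa} gives $\mylini(\Pa,\kappa)$ for every regular $\kappa\in[\lambda_{i},\lambda_{5}]$ together with $\myparti(\Pa,\lambda_{i},\lambda_{5})$ for $i=1,2,3,4$. On the $\mylin$-side, each $\mylini(\Pa,\lambda_{i})$ (with $\lambda_{i}\ne\kappa_{k}$ for every $k$) survives the whole tower, giving $\mathfrak{b}_{i}\le\lambda_{i}$; moreover, whenever $\mylini(\Pa,\kappa_{k})$ already holds—equivalently $\kappa_{k}\ge\lambda_{i}$, since $\kappa_{k}\le\lambda_{5}$ is automatic for $k\in\{6,7,8,9\}$—the identity $\cf(j_{k}(\kappa_{k}))=\theta_{k}=\lambda_{k}$ produces $\mylini(\mathbb{P}^{k},\lambda_{k})$, which also survives all later $j_{k'}$'s. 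Reading off from
\[
\kappa_{9}<\lambda_{1}<\kappa_{8}<\lambda_{2}<\kappa_{7}<\lambda_{3}<\kappa_{6}<\lambda_{4},
\]
the largest such new $\mylin$-value for $i=1,2,3$ is $\lambda_{8},\lambda_{7},\lambda_{6}$ respectively, while $i=4$ gets none; thus $\cofnull\ge\lambda_{8}$, $\nonnull\ge\lambda_{7}$, $\mathfrak{d}\ge\lambda_{6}$, and $\covmeager\ge\lambda_{5}$.

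On the $\mypart$-side, iterate Lemma~\ref{lem:partialpreserved} starting from $\myparti(\Pa,\lambda_{i},\lambda_{5})$: each pass through $j_{k}$ with $\kappa_{k}>\lambda_{i}$ inflates the second parameter $\mu$ to $|j_{k}(\mu)|$, which by Lemma~\ref{lem:embed}(\ref{item:joflambda}) equals the next $\lambda$ in the chain; each pass with $\kappa_{k}<\lambda_{i}$ preserves $\mu$. The same arithmetic yields $\mypartI(\PaIX,\lambda_{1},\lambda_{8})$, $\mypartII(\PaIX,\lambda_{2},\lambda_{7})$, $\mypartIII(\PaIX,\lambda_{3},\lambda_{6})$, and $\mypartIV(\PaIX,\lambda_{4},\lambda_{5})$, which by Lemma~\ref{lem:partialcharacteristics} simultaneously deliver $\mathfrak{b}_{i}\ge\lambda_{i}$ and cap each $\mathfrak{d}_{i}$ at the matching value from the previous paragraph. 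Combining the two sides pins all nine Cichoń entries to the prescribed $\lambda_{i}$'s.

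The main obstacle is the foundational one: to interpret the nested Boolean ultrapower as a genuine forcing notion over $V$—rather than only within the inner models $M_{k-1}$—and to verify that the single-step preservation results of Section~\ref{ss:forcingBUP} compose correctly under this iteration, in particular that absoluteness of $\mylini$ and $\myparti$ relative to each $M_{k-1}$ is maintained. Once this machinery is in place, the eight numerical verifications reduce to the elementary arithmetic relating the positions of the $\kappa_{k}$ to those of the $\lambda_{i}$ in the chain displayed above.
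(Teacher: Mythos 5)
Your proposal is correct and follows essentially the same route as the paper: the four-fold composition $\PaIX=j_9(j_8(j_7(j_6(\Pa))))$ with $\cf(j_k(\kappa_k))=\lambda_k$, followed by tracking $\mylini$ and $\myparti$ through Lemmas~\ref{lem:linerapreserved} and~\ref{lem:partialpreserved}, and your bookkeeping of which $\kappa_k$ lies above which $\lambda_i$ reproduces exactly the paper's conclusions $\mylini(\PaIX,\lambda_{9-i})$ and $\myparti(\PaIX,\lambda_i,\lambda_{9-i})$. One small remark: the "foundational obstacle" you flag is lighter than you suggest and does not need preservation of strong compactness in the intermediate models, since every $j_k$ is an embedding of $V$ itself and each $\mathbb{P}^{k-1}$ is a set in $V$ (lying in the transitive class $M_{k-1}\subseteq V$), so the single-step results of Section~\ref{ss:forcingBUP} apply verbatim at each stage.
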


\begin{proof}

For $i=6,\dots,9$, let $j_i$ be a BUP-embedding from $\kappa_i$ to $\lambda_i$,
i.e., $\cf(j_i(\kappa_i))=|j_i(\lambda_i)|=\lambda_i$.
(Such an embedding exists according to Corollary~\ref{cor:embed}.)

We use $\Pa$ of Theorem~\ref{thm:Pa}, and
set  $\Paip \coloneq j_{i+1}(\Pai)$ 
for $i=5,6,7,8$.
In particular, $\PaIX = j_9(j_8(j_7(j_6(\Pa)))) $.

We enumerate the relevant characteristics of Cicho\'n's diagram as $\mathfrak x_1,\dots,\mathfrak x_8$ in the desired increasing order as displayed
in Figure~\ref{fig:ourorder}. For $i=1,\dots,4$ (i.e., $\mathfrak x_i$
in the left half) we 
set $i^* \coloneq 9-i$ (so $\mathfrak x_{i^*}$ is the dual of $\mathfrak x_i$
in the right half).

Recall that according to Lemmas~\ref{lem:linearcharacteristics} and~\ref{lem:partialcharacteristics}, 
$\mylini(\lambda)$ implies $\mathfrak x_i\le \lambda$ and 
$\mathfrak x_{i^*}\ge \lambda$; and 
$\myparti(\lambda, \mu)$ implies 
$\mathfrak x_i\ge \lambda$ and 
$\mathfrak x_{i^*}\le \mu$.

\emph{Claim:} $\PaIX$ 
forces  $2^{\aleph_0}=\lambda_9$.

\emph{Proof:} By induction on $i=5,\dots,8$ 
each $\Paip$ forces $2^{\aleph_0}=j_{i+1}(\lambda_i)=\lambda_{i+1}$
(according to Lem.~\ref{lem:aa}(\ref{item:size})
and Cor.~\ref{cor:embed}).

\emph{Claim:} $\mylini(\PaIX,\lambda_i)$ holds for $i=1,\dots,4$; as well as 
$\mylinIV(\PaIX,\lambda_5)$.

\emph{Proof:}  The statements  hold for $\Pa$ by
Thm.~\ref{thm:Pa} and are preserved by Cor.~\ref{cor:bbb}(\ref{item:linerapreservedcor}).

This implies $\mathfrak x_i\le \lambda_i$ for $i=1,\dots,4$; as well as 
$\mathfrak x_5=\covmeager\ge \lambda_5$.

\emph{Claim:} $\mylini(\PaIX,\lambda_{i^*})$ holds for $i=1,2,3$.

\emph{Proof:}  Note that $\kappa_{i^*+1}<\lambda_i<\kappa_{i*}<\lambda_5$.
So $\mylini(\Pa,\kappa_{i^*})$ holds (Thm.~\ref{thm:Pa}).
This implies $\mylini(\Paell,\kappa_{i^*})$
for $\ell=5,\dots,i^*-1$ (Cor.~\ref{cor:bbb}(\ref{item:linerapreservedcor})),
then  $\mylini(\Paell,\lambda_{i^*})$ for $\ell=i^*$
(Cor.~\ref{cor:bbb}(\ref{item:linerapreservedcorb})),
and then again 
$\mylini(\Paell,\lambda_{i^*})$ for $\ell=i^*+1,\dots,9$
(again Cor.~\ref{cor:bbb}(\ref{item:linerapreservedcor})).

This implies $\mathfrak x_{\ell}\ge \lambda_{\ell}$ for $\ell=6,7,8$.

\emph{Claim:} $\myparti(\PaIX,\lambda_i,\lambda_{i^*})$ holds for $i=1,2,3,4$.

\emph{Proof:}  $\myparti(\Pa,\lambda_i,\lambda_5)$ holds by Theorem~\ref{thm:Pa} and implies
$\myparti(\Paell,\lambda_i,\lambda_\ell)$ for $\ell=5,\dots,i^*$ (while $\kappa_\ell>\lambda_i)$ (Cor.~\ref{cor:bbb}(\ref{item:partialpreservedcor})), then
$\myparti(\Paell,\lambda_i,\lambda_{i^*})$  for $\ell=i^*+1,\dots,9$
(Cor.~\ref{cor:bbb}(\ref{item:partialpreservedcord})).

This implies $\mathfrak x_i\ge \lambda_i$ for $i=1,\dots,4$ as well as
$\mathfrak x_\ell\le \lambda_{\ell}$ for $\ell=5,\dots,8$.
\end{proof}

\section{Questions}\label{sec:questions}

The result poses some obvious questions. (Since the initial submission of the paper, some of the questions found \emph{partial answers} which we mention in the following.)

\begin{enumerate}[(a)]

\item Can we prove the result without using large cardinals?

It would be quite surprising if compact cardinals are needed,
but a proof without them will probably be a lot more complicated.

\emph{Partial answers:}
\begin{itemize}
    \item Gitik~\cite{moti2} points out that 
    certain extender embeddings are BUP-embeddings, and that
    a variation of superstrongs is sufficient to construct 
    the BUP-embeddings required in our construction.
    \item 
    As mentioned, we think that the~\emph{result} does not require
    any large cardinals. The~\emph{proof} in this paper obviously does:
    Gitik (ibid.) notes that 
    a measurable $\kappa$ with Mitchell order $\ge\kappa^{++}$ is required to get a BUP-embedding from $\kappa$ to some regular $\lambda>\kappa$. More generally,
    an easy argument given in~\cite[Sec.\ 3.1]{thirteen} (following a deeper observation pointed out by Mildenberger~\cite{MR1625907}), shows that at least $0^\#$ is required to get a constellation of models of the
    type used in our proof.\footnote{More specifically, for $j_6:V\to M$ and $G$ $j_6(\Pa)$-generic over $V$, 
    we know that a $M[G]$ is a $\kappa$-closed 
    transitive subclass of $V[G]$, cf. Lemma~\ref{lem:aa}(\ref{item:aa3}). And we have $M[G]\models \nonmeager=j_6(\lambda_4)$ and 
    $V[G]\models \nonmeager=\lambda_4<|j_6(\lambda_4)|=\lambda_6$, which implies 
    at least $0^\#$ (and probably a measurable).}
    \item
In~\cite{BCM} (building on~\cite{mejiavert}), a construction for the left half of Cicho\'n's diagram is introduced that additionally forces $\nonmeager<2^{\aleph_0}$. Accordingly, three strongly compact cardinals (or: subcompacts) are sufficient to get the ten different values.
\end{itemize}

\item Does the result still hold for other 
specific values of $\lambda_i$, such as $\lambda_i=\aleph_{i+1}$?

In our construction, 
the regular cardinals $\lambda_i$ for $i=4,\dots,9$
can be chosen quite arbitrarily (above the compact $\kappa_6$, that is).
However, $\aleph_1$, $\lambda_1,\lambda_2$ and $\lambda_3$ each have to be separated by a compact cardinal (and furthermore $\lambda_3$ has to be a successor of a regular cardinal).

\emph{Partial answer:}
In~\cite{thirteen}, it is shown that we can choose the values
quite freely. E.g., $\lambda_i=\aleph_{i+1}$ is possible;
as is basically ``any choice'' of successor cardinals.
We also show that we can replace any  number of instances of $< $  by $=$.

\item Are other linear orders between the characteristics of Cicho\'n's  diagram consistent?

\begin{figure}[b]
  \centering
 \begin{minipage}[b]{0.5\textwidth}
\[
\xymatrix@=2.5ex{
           & \lambda_3\ar[r]        & \lambda_4 \ar[r]\ar@{..}[]+<1.2em,0.5em>;[dd]+<1.2em,-0.5em>      &  \mye \ar[r]     & \lambda_8\ar[r]  &\lambda_9 \\
           &                    & \lambda_2\ar[r]\ar[u]  &  \lambda_7\ar[u] &              &\\ 
\aleph_1\ar[r] & \lambda_1\ar[r]\ar[uu] & \mye\ar[r]\ar[u] &  \lambda_5\ar[r]\ar[u]& \lambda_6\ar[uu] &
}
\]    
    \subcaption{\label{fig:alt1}An ordering compatible with FS ccc.}
  \end{minipage}%
  \begin{minipage}[b]{0.5\textwidth}
\[
\xymatrix@=2.5ex{
           & \lambda_2\ar[r]        & \lambda_7 \ar[r]      &  \mye \ar[r]     & \lambda_8\ar[r]  &\lambda_9 \\
           &                    & \lambda_4\ar[r]\ar[u]  &  \lambda_5\ar[u] &              &\\ 
\aleph_1\ar[r] & \lambda_1\ar[r]\ar[uu] & \mye\ar[r]\ar[u] &  \lambda_3\ar[r]\ar[u]& \lambda_6\ar[uu] &
}
\]    
    \subcaption{\label{fig:alt2}Another one, incompatible with FS ccc.}
  \end{minipage}
  \caption{\label{fig:alt}Alternative orderings of the cardinal characteristics.}
\end{figure}
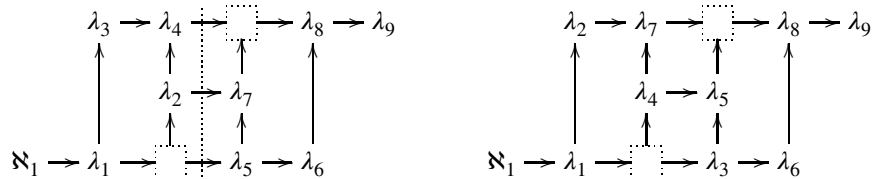

Note that in this paper, we use a FS ccc iteration of length $\delta$ with uncountable cofinality, cf.\ \ref{lem:lemwhichisnotneededeverorisit},
which always results in $\nonmeager\le\cof(\delta)\le\covmeager$.
Under these restrictions, there are only four possible
assignments.
Of course there are a lot more\footnote{In fact, we counted 57 in addition to the 4 that are compatible with FS ccc.}
possibilities to assign $\lambda_1,\dots,\lambda_8$
to Cicho\'n's  diagram in a way that satisfies the known ZFC-provable 
(in)equalities. Figure~\ref{fig:alt}.\textsc{b} is an example. 
Such orders require entirely different methods.
(Even to get just the five different values $\aleph_1=\lambda_1=\lambda_2=\lambda_3=\lambda_4=\lambda_5<\lambda_6<\lambda_7<\lambda_8<\lambda_9$ in this figure turned out to be rather involved~\cite[Sec.\ 11]{five}.)

\emph{Partial answer:}
Another of the orders compatible with FS ccc iterations,
the one of Figure~\ref{fig:alt}.\textsc{a}, is consistent~\cite{other}.
See also~\cite{modKST}.
(A different initial forcing gives the modified ordering of the left hand side; then the same construction and proof as in this paper gives us the whole diagram.) 
%

\item Is it consistent that  
other cardinal characteristics that have been studied,\footnote{The most important ones are described in~\cite{MR2768685}.} in addition to the ones
in Cicho\'n's  diagram, have pairwise different values as well? 

\emph{Partial answer:}
In~\cite{thirteen}, it is forced that additionally  $\aleph_1<\mathfrak{m}<\mathfrak{p}<\mathfrak{h}<\addnull$ holds.
\end{enumerate}


\bibliographystyle{aomalpha.bst}
\bibliography{1122}

\end{document}